\numberwithin{equation}{section}
\theoremstyle{plain}
        \newtheorem{theorem}{Theorem}[section]
        \newtheorem{lemma}[theorem]{Lemma}
        \newtheorem{corollary}[theorem]{Corollary} 
        \newtheorem{definition}[theorem]{Definition} 
        \newtheorem{remark}[theorem]{Remark}
        \newtheorem*{claim*}{Claim}
        \newtheorem*{fact*}{Fact} 
\newtheorem*{theorem*}{Theorem}
\newtheorem*{definition*}{Definition}
\newtheorem*{proposition*}{Proposition}
\let\oldmarginpar\marginpar
\renewcommand\marginpar[1]{\-\oldmarginpar[\raggedleft\footnotesize #1]
{\raggedright\footnotesize #1}}
\newcommand \be {\begin{equation}}
\newcommand \ee {\end{equation}}
\newcommand \la \langle
\newcommand \ra \rangle
\renewcommand\div{\text{div\,}}
\newcommand \loc {\text{loc}}
\newcommand{\R}{\mathbb{R}}
\newcommand{\N}{\mathbb{N}}
\newcommand{\PP}{\mathbb{P}}
\newcommand{\PPi}{\mathbb{\Pi}}
\newcommand{\cF}{\mathcal{F}}
\newcommand{\cS}{\mathcal{S}}
\renewcommand{\d}{\partial} 
\DeclareMathOperator{\di}{div}
\newcommand{\defeq}{\mathop{=}\limits^{\textrm{def}}}
\title[2D stationary incompressible inhomogeneous Navier--Stokes equations]{Solvability of the two-dimensional stationary incompressible inhomogeneous Navier--Stokes equations with variable viscosity coefficient}
\author[Z. He]{Zihui He} 
\author[X. Liao]{Xian Liao} 
\address[Z. He and X. Liao]
{Institute of Analysis, Karlsruhe Institute of Technology\\
Englerstraße 2, 76131 Karlsruhe, Germany.}
\email{zihui.he@kit.edu, xian.liao@kit.edu}
\date{\today}
\begin{document}

\maketitle

\begin{abstract}
    We show the existence and the regularity properties of (a class of)  weak solutions to the two-dimensional stationary incompressible inhomogeneous  Navier--Stokes equations with density-dependent viscosity coefficients, by analyzing a fourth-order nonlinear elliptic equation for the stream function. 

We formulate the stationary solutions for the parallel, concentric and radial flows respectively, and we give some (ir-)regularity results as well as  some explicit examples in the case of piecewise-constant viscosity coefficients.  
\end{abstract}

\noindent {\sl Keywords:}  Inhomogeneous incompressible
Navier-Stokes equations, variable viscous coefficient, 
fourth-order elliptic equation

\noindent {\sl AMS Subject Classification (2000):} 35Q30, 76D03  

\tableofcontents
\section{Main results} 
In this paper we are concerned with  the two-dimensional  stationary inhomogeneous incompressible Navier--Stokes equations
\begin{equation}\label{SNS}
\left\{
\begin{aligned}
&\di(\rho u\otimes u)-\di(\mu Su)+\nabla\Pi= f,\\
&\di u=0,\,\di(\rho u)=0.
\end{aligned}
\right.
\end{equation}
The unknown density function $\rho\geq0$, the unknown velocity vector field $u=(u_1, u_2)^T\in\R^2$ and the unknown pressure $\Pi\in \R$ depend on the spatial variable $x=(x_1, x_2)\in\R^2$. 
The  variable viscosity coefficient depends continuously on the density function
\begin{equation}\label{mu}
\mu=b(\rho)\in  [\mu_\ast, \mu^\ast], \end{equation}
where the lower and upper bounds $\mu_\ast, \mu^\ast$ are two positive constants and    $b\in C(\R; [\mu_\ast, \mu^\ast])$ is a given function. 
The external force $f:\R^2\mapsto\R^2$ is given.

In the above, $\nabla =\begin{pmatrix}
\d_{x_1}\\\d_{x_2}
\end{pmatrix}$,
   $\nabla u=\begin{pmatrix} \d_{x_1}u_1 & \d_{x_2}u_1\\ \d_{x_1}u_2 & \d_{x_2}u_2\end{pmatrix}$ and the symmetric   strain tensor in \eqref{SNS} reads as
$$
Su\defeq (\nabla +\nabla^T) u=\begin{pmatrix} 2\d_{x_1}u_1 & \d_{x_2}u_1+\d_{x_1}u_2\\ \d_{x_2}u_1+\d_{x_1}u_2 & 2\d_{x_2}u_2\end{pmatrix}.
$$
We also denote $\div =\nabla\cdot$,   $u\otimes u=\begin{pmatrix}{u_1}^2&u_1u_2\\u_1u_2&{u_2}^2\end{pmatrix}$, and then
\begin{align*}
&\div(\rho u\otimes u)=\begin{pmatrix}
\partial_{x_1}(\rho {u_1}^2) + \partial_{x_2}(\rho u_1u_2)\\
\partial_{x_1}(\rho u_1u_2)+\partial_{x_2}(\rho {u_2}^2)\end{pmatrix},
\\
&\div(\mu Su)=\begin{pmatrix}
\d_{x_1}(\mu 2\d_{x_1}u_1)+\d_{x_2}(\mu(\d_{x_2}u_1+\d_{x_1}u_2))
\\
\d_{x_1}(\mu(\d_{x_2}u_1+\d_{x_1}u_2))+\d_{x_2}(\mu 2\d_{x_2}u_2)
\end{pmatrix}.
\end{align*}

This section is organised as follows.

In Subsection \ref{subs:works} some  related    incompressible Navier--Stokes models and their mathematical results will be  presented.

In  Subsection \ref{subs:result} we will show the \textit{existence and  regularity} results of (a class of) weak solutions in a bounded domain
  to the stationary Navier--Stokes equations \eqref{SNS}   in Theorem \ref{thm}.
   The proof of Theorem \ref{thm} will be placed in Section \ref{proof}, and for the reason of completeness both the exterior domain case and the whole plane case will be studied in Appendix \ref{app:ex}.

In Subsection \ref{subs:symmetry} we are  interested in the symmetric solutions to the stationary Navier--Stokes equations \eqref{SNS}, and in particular we will formulate the solutions for the \textit{parallel, concentric and radial flows} respectively in Theorem \ref{thm2}.
In the case of piecewise-constant viscosity coefficients, we  will  give some \textit{irregularity} results in Corollary \ref{corollary}.
Some explicit examples will be given in Appendix \ref{example}.

In Subsection \ref{subs:discussion}, we will  show the \textit{$L^p$-type regularity} of   the velocity gradient under some specific  regularity assumptions on the viscosity coefficient (where the piecewise-constant viscosity coefficient case is allowed) in Theorem \ref{thm3}.

We remark here that in the proofs of existence and regularity results, we are going to consider the stream function $\Phi$ such that $u=\nabla^\perp\Phi=\begin{pmatrix}\d_{x_2}\Phi\\ -\d_{x_1}\Phi\end{pmatrix}$, which satisfies a fourth-order elliptic equation with   the fourth-order elliptic operator given by
$$
L_\mu=(\d_{x_2x_2}-\d_{x_1x_1}) \mu (\d_{x_2x_2}-\d_{x_1x_1}) +(2\d_{x_1x_2})\mu (2\d_{x_1x_2} ) .
$$ 
On the other side, the irregularity of the viscosity coefficient may result in irregularity on the velocity vector field, e.g. the piecewise-constant viscosity coefficient may imply $\Delta u\not\in L^1_\loc(\R^2)$ or $\div(\mu Su)\not\in L^1_\loc(\R^2)$ (see Corollary \ref{corollary}), although $\nabla u\in L^p(\R^2)$ and
the divergence-free part of the viscous term $\PP\div(\mu Su)=\nabla^\perp \Delta^{-1} L_\mu \Phi\in L^p(\R^2)$   for all finite $p$ (see Theorem \ref{thm3}).

\subsection{Related works}\label{subs:works}
There are a few  works in the literature contributing to the study of the {\it evolutionary} two-dimensional incompressible inhomogeneous Navier--Stokes equations with variable viscosity coefficient
\begin{equation}\label{NS}
\left\{
\begin{aligned}
&\partial_{t}(\rho u)+\operatorname{div}(\rho u\otimes u)-\operatorname{div}(2 \mu Su)+\nabla \Pi=0,\quad(t, x) \in \mathbb{R}^{+} \times \Omega,\\
&\operatorname{div} u=0,\quad \partial_{t} \rho+\operatorname{div}(\rho u)=0,\\
&\rho\big|_{t=0}=\rho_{0},\quad (\rho u)\big|_{t=0}=m_{0}.
\end{aligned}
\right.
\end{equation}
\textcite{Lions} showed the global-in-time existence of   weak solutions
$(\rho, u)\in \bigl(L^\infty(\R^+\times\Omega),  (L^2(\R^+; H^1(\Omega)))^2\bigr)$   of the system \eqref{NS} under the initial condition $\rho_0\in L^\infty(\Omega)$, $\frac{m_0}{\rho_0}\in (L^2(\Omega))^2$. 
The uniqueness and  the regularity properties  of such weak solutions are still open, even in dimension two. There are some partial results toward this issue, but to our best knowledge they are all limited to the case where  the viscosity coefficient $\mu(x)$ is close to some positive constant $\nu\in\R^+$:
\begin{equation}\label{munu}
\|\mu(x)-\nu\|_{L^\infty(\Omega)}<\varepsilon,
\end{equation}
where $\varepsilon$ is some small enough positive constant. \textcite{Desjardins} showed the regularity property of the velocity vector field $u\in L^\infty(\R^+; (H^1(\mathbb{T}^2))^2)$ for initial data $u|_{t=0}\in (H^1(\mathbb{T^2}))^2$, 
 if the smallness condition \eqref{munu} holds.
 \textcite{Zhang1} proved the existence and uniqueness of the solution under \eqref{munu} and further smoothness assumptions on the initial density function $\rho_{0}-1 \in L^{2}(\R^2)\cap L^{\infty} \cap \dot{W}^{1, r}(\R^2)$, $r>2$. 
  \textcite{Paicu} considered the  so-called density-patch problem with piecewise-constant density function $\rho_0=\eta_1\mathbb 1_{\Omega}(x)+\eta_2\mathbb 1_{\Omega^C}(x)$, $\eta_1, \eta_2\in\R^+$, and showed that the $H^3(\R^2)$-boundary regularity of the domain is propagated by   time evolution provided with \eqref{munu}. 
 The case where $\mu(x)=\nu$ is a positive constant has been intensively studied in the past two decades, see e.g. \cite{Danchin,DanchinMucha, Ladyzhenskaya1} and the references therein. 
 It is also worth mentioning the work \cite{Kazhikov} for the study of the {\emph {compressible}} Navier--Stokes equations with  variable viscosity coefficient.

If we consider the stationary \emph{homogeneous} incompressible flow where the density function $\rho=1$ and the viscosity coefficient $\mu=\nu$ is a positive constant, then the  system \eqref{SNS} becomes the following classical stationary Navier--Stokes equations  
\begin{equation}\label{CNS}
\left\{
\begin{aligned}
&\di(  u\otimes u)-\nu \Delta u+\nabla\Pi= f,\quad x\in \Omega,\\
&\di u=0.
\end{aligned}
\right.
\end{equation}
It has been studied extensively in the literature, whenever the underlying domain is a connected bounded domain $\Omega$ , or a multi-connected domain $\cup_{i=1}^n\Omega_i$, or the exterior of a multi-connected set $U=(\cup_{i=1}^n\Omega_i)^C$, or   the whole plane $\R^2$, see the celebrated books \cite{Galdi,Ladyzhenskaya}.
If $\Omega$ has a boundary $\d\Omega$, we assume the boundary value condition for the system \eqref{CNS}
\begin{equation}\label{u00}
u|_{\d \Omega}=u_0,
\end{equation}
and in compatible with $\div u=0$ we assume   no   flux through the boundary $\d\Omega$ 
\begin{equation}\label{u0}
 \int_{\partial\Omega}u_0\cdot n\,ds=0.
\end{equation}
In the above, $n=(n_1, n_2)$ denotes the exterior normal vector to the boundary $\partial\Omega$. 

 \textcite{Leray} showed the existence of weak solutions $u\in (H^1(\Omega))^2$  on a  simply connected  bounded domain $\Omega$ under the zero flux condition \eqref{u0}. 
 This solvability result can be generalized straightforward to a multi-connected domain case $\cup_{i=1}^n\Omega_i$, if we assume  furthermore no flux through the boundary of each connected component
\begin{equation}\label{ui}
\mathcal F_i=\int_{\partial\Omega_i}u_0\cdot n\,ds=0,\quad \forall 1\le i\le n.
\end{equation}
If we assume only the smallness of the fluxes $\mathcal F_i$  
or assume some further symmetric properties, the solvability of the system \eqref{CNS} was also obtained, cf. \cite{Galdi4}.
 On a multi-connected domain  with only the zero \emph{total flux} condition \eqref{u0}, the solvability was shown by \textcite{Korobkov}. 
\textcite{Leray} studied  the system \eqref{CNS} also on the exterior domain   of  a multi-connected set $U=(\cup_{i=1}^n\Omega_i)^C$ under the boundary condition \eqref{ui}, and obtained the   weak solutions $u\in (\dot{H}^1(U))^2$  by constructing a sequence of weak solutions on the bounded domains which converge to $U$.
If the fluxes $\cF_i$ are small, 
the solvability of \eqref{CNS} on $U$ was established by \textcite{Finn}. 
Concerning the whole plane $\R^2$ case, \textcite{Guillod} showed that for any given vector $d\in\R^2$ and a bounded positive measure set $D\subset \R^2$, there exist solutions $u\in (\dot{H}^1(\R^2))^2$ satisfying the prescribed mean value on $D$:  $d=\frac{1}{\hbox{meas}(D)}\int_D u\in \R^2$. 
However, the existence of  decaying solutions, as well as the uniqueness and the asymptotic behaviour of the solutions on the unbounded domains are still open, see e.g.  \cite{Guillod2,Guillod4,Russo} for further related discussions.
We also mention that \textcite{Leray} studied also \eqref{CNS}  
 in dimension three, as well as   the \emph{evolutionary} classical Navier-Stokes equations  (i.e. \eqref{NS} with $\rho=1$ and $\mu=\nu$, see also  the celebrated books \cite{ConstantinFoias, Temam}).

The stationary  inhomogeneous incompressible flow with \emph{constant} viscosity coefficient  is described by
\begin{equation}\label{NNS}
\left\{
\begin{aligned}
&\di(\rho u\otimes u)-\nu \Delta u+\nabla\Pi= f,\quad x\in\Omega,\\
&\di u=0,\,\di(\rho u)=0.
\end{aligned}
\right.
\end{equation}
On a simply connected domain in dimension two,  by using the incompressibility condition $\di u=0$ and the zero flux condition \eqref{u0}, the velocity vector field $u$ can be written as 
$$u=\nabla^\perp \Phi,$$ 
where $\Phi$ is the stream function of $u$. If 
$$\rho=\eta(\Phi),$$
for some well-chosen bounded function $\eta\in L^\infty(\R; [0,\infty))$, then the density equation $\di(\rho u)=0$  is automatically satisfied, at least in the distribution sense if $u\in H^1(\Omega)$ \footnote{Let $\{\eta^\varepsilon\}$ be the regularised function sequence of $\eta$ which are uniformly bounded. Then 
$$
\div(\eta^\varepsilon(\Phi) u)=(\eta^\varepsilon)'(\Phi)\nabla\Phi\cdot u=0.
$$
The equality $\div(\rho u)=0$ follows as $\{\eta^\varepsilon(\Phi)u\}_\varepsilon$ (up to a subsequence) converges weakly to $\eta(\Phi)u$    in $L^2(\Omega)$.}. \textcite{Frolov} showed the existence and regularity results for the solutions of the following form
\begin{equation}\label{solution0}
(\rho, u)=(\eta(\Phi), \nabla^\perp\Phi),
\end{equation}
where $\eta$ is a \emph{H\"older continuous} function.
From now on, we call  the form \eqref{solution0} as  Frolov's form. 
\textcite{Santos1} improved this existence result to only \emph{bounded}  $\eta$-functions.
Ammar-Khodja and Santos \cite{Santos3,Santos2} considered the unbounded Y-shape domain.

However, to the best of our knowledge, there are neither  existence nor regularity results of solutions to  the two-dimensional stationary Navier-Stokes system \eqref{SNS} with \emph{variable} viscosity coefficient.
We are going to give   some existence and regularity results for the solutions of Frolov's form to the system \eqref{SNS} in Subsection \ref{subs:result}, whose proof is postponed in Section \ref{proof}.
We will  formulate  the solutions with certain symmetry properties in Subsection \ref{subs:symmetry}, where some irregularity results  with piecewise-constant viscosity coefficients will  also be given.
Finally we will discuss further  regularity issues in Subsection \ref{subs:discussion}.
 
\subsection{Existence and regularity results}\label{subs:result}
We are going to study  the boundary value problem for the  two dimensional stationary inhomogeneous incompressible Navier-Stokes equation \eqref{SNS} on a simply connected bounded $C^{1,1}$-domain $\Omega\subset\R^2$:
\begin{equation}\label{BVP}
\left\{
\begin{aligned}
&\di(\rho u\otimes u)-\di(\mu Su)+\nabla\Pi= f,\\
&\di u=0,\,\di(\rho u)=0,\\
&u|_{\d\Omega}=u_0,
\end{aligned}
\right.
\end{equation}
where the boundary value $u_0$ satisfies the zero flux condition \eqref{u0}: $
 \int_{\partial\Omega}u_0\cdot n\,ds=0$.
Here the positive  viscosity coefficient  $\mu$ depends smoothly on the density function: 
$$\mu=b(\rho),$$
 where $b\in C(\R; [\mu_\ast, \mu^\ast])$ with two positive constants $\mu_\ast, \mu^\ast>0$ is a given function. 
 
 This section is organised as follows.
 We will introduce the boundary value problem for the stream function in Subsection \ref{subs:Phi}.
 Then the density function as well as its boundary condition will be discussed in  Subsection \ref{subs:rho}.
 The definitions of weak solutions and its existence and regularity properties will be given in Subsection \ref{subs:weak} and Subsection \ref{subs:ex,re} respectively.
 The proofs will be postponed in Section \ref{proof}.
 The exterior domain and the whole plane cases will be studied in Appendix \ref{app:ex}.
\subsubsection{Stream function}\label{subs:Phi}
We are going to look for the unknown stream function $\Phi:\overline{\Omega}\mapsto\R$ such that the divergence-free velocity vector field is given by
\begin{equation}\label{Phi}
u=\nabla^\perp \Phi\defeq\begin{pmatrix}\d_{x_2}\Phi\\-\d_{x_1}\Phi\end{pmatrix}.
\end{equation}
Let  $n=(n_1, n_2)$ and $\tau=(n_2, -n_1)$ denote  the unit normal and tangential vector field on the boundary $\partial\Omega$ respectively.
Then $\Phi$ should satisfy the boundary value condition
\begin{equation*}
\frac{\d \Phi}{\d n}\big|_{\d\Omega}=u_0\cdot\tau,
\quad\frac{\d \Phi}{\d\tau}\big|_{\d\Omega}=-u_0\cdot n.
\end{equation*}
If we parameterize the boundary ${\partial\Omega}$ by $\gamma: [0,2\pi)\mapsto{\partial\Omega}$ such that $\gamma'(s)=\tau(\gamma(s))$, then with a constant $C_0\in\R$,
 \begin{equation}\label{Phi0}
 \begin{split}
&\Phi|_{{\partial\Omega}}(\gamma(s))=\Phi_0(\gamma(s))\defeq-\int_{0}^s u_0\cdot n\, d\theta+C_0,\quad s\in [0,2\pi),
\\
&\frac{\d \Phi}{\d n}\big|_{\d\Omega}(\gamma(s))=\Phi_1(\gamma(s))\defeq (u_0\cdot\tau)(\gamma(s)),\quad s\in [0,2\pi).
\end{split}
\end{equation}
We fix this constant $C_0=0$ from now on.

We next derive the partial differential equation satisfied by the stream function.    
We apply $\nabla^\perp\cdot=\begin{pmatrix}\d_{x_2}\\  -\d_{x_1}\end{pmatrix}\cdot\,$ to the first equation in \eqref{SNS} to arrive at
\begin{equation*}
\nabla^\perp\cdot\div(\mu Su)
=-\nabla^\perp\cdot   f+\nabla^\perp\cdot\div(\rho u\otimes u).
\end{equation*}
By straightforward calculation,  the left-hand side reads as a fourth-order elliptic operator with positive variable coefficient $\mu\geq\mu_\ast>0$ on $\Phi$:
\begin{align}\label{PP,Lmu}
\nabla^\perp\cdot\div(\mu Su)
&=\nabla^\perp\cdot\div\left(\mu
\begin{pmatrix}2\d_{x_1x_2}\Phi&(\d_{x_2x_2}-\d_{x_1x_1})\Phi\\
(\d_{x_2x_2}-\d_{x_1x_1})\Phi&-2\d_{x_1x_2}\Phi\end{pmatrix}
\right)\notag\\
&=(\d_{x_2x_2}-\d_{x_1x_1})\bigl( \mu (\d_{x_2x_2}-\d_{x_1x_1})\Phi\bigr)+2\d_{x_1x_2}(\mu 2\d_{x_1x_2}\Phi).
\end{align}   
That is, the first equation in \eqref{SNS} becomes 
\begin{align}\label{elliptic}
&L_\mu\Phi=-\nabla^\perp\cdot f+\nabla^\perp\cdot\div(\rho \nabla^\perp\Phi\otimes\nabla^\perp\Phi),
\end{align}  
where  $L_\mu$ denotes the fourth-order elliptic operator  
\begin{equation}\label{Lmu-}
L_\mu=(\d_{x_2x_2}-\d_{x_1x_1}) \mu (\d_{x_2x_2}-\d_{x_1x_1}) +(2\d_{x_1x_2})\mu (2\d_{x_1x_2} ).
\end{equation}
In particular, if $\mu(x)=\nu$ is a positive constant, then $L_\nu=\nu\Delta^2$. 
We are going to consider the boundary value problem for $\Phi$: 
\begin{equation}\label{BVP:Phi}
\left\{
\begin{aligned}
&L_\mu\Phi=-\nabla^\perp\cdot f+\nabla^\perp\cdot\div(\rho \nabla^\perp\Phi\otimes\nabla^\perp\Phi),\\
& \Phi|_{{\partial\Omega}} =\Phi_0,\quad 
 \frac{\d \Phi}{\d n}\big|_{\d\Omega} =\Phi_1,
\end{aligned}
\right.
\end{equation}
where $L_\mu, \Phi_0, \Phi_1$ are given in \eqref{Lmu-} and \eqref{Phi0} respectively.

We recall here the definition of elliptic operators in divergence form of order $2m$, $m\in\mathbb{N}$ (see e.g. \cite{ADN59,ADN2,Dong}) for readers' convenience.  
 Let $\mathcal L u=\sum_{|\alpha|,|\beta|\le m} D^\alpha (a_{\alpha\beta}D^\beta u)$ where $\alpha$ and $\beta$ are multi-indices,  $u:\R^d\to\R^n$ is a vector-valued function and $a_{\alpha \beta}=\left[a_{\alpha \beta}^{i j}(x)\right]_{i, j=1}^{n}$,  $|\alpha|,|\beta| \le m$, are $n \times n$
matrix-valued functions. We say that $\mathcal L$ is an elliptic operator of $2m$th-order if there exists a constant $\delta\in (0,1)$ such that
$$
\delta|\xi|^{2} \leq \sum_{|\alpha|=|\beta|=m} 
\hbox{Re}\Bigl( a_{\alpha \beta}(x) \xi_{\beta},  \xi_{\alpha}\Bigr)\leq \delta^{-1}|\xi|^2,
$$
for any $x \in \mathbb{R}^{d}$ and $\xi=\left(\xi_{\alpha}\right)_{|\alpha|=m}, \xi_{\alpha} \in \mathbb{R}^{n}$.
Here we can rewrite $L_\mu$ as 
\begin{align*}
L_\mu=&\d_{x_1x_1}\mu \d_{x_1x_1}+\d_{x_2x_2}\mu \d_{x_2x_2}-\d_{x_1x_1}(\mu-\frac{\mu_*}{2}) \d_{x_2x_2}-\d_{x_2x_2}(\mu-\frac{\mu_*}{2}) \d_{x_1x_1}\\
&+2\d_{x_1x_2}(\mu-\frac{\mu_*}{2}) \d_{x_1x_2}+2\d_{x_2x_1}\mu \d_{x_2x_1}
\defeq \sum_{|\alpha|=|\beta|=2}D^\alpha(a^\mu_{\alpha\beta}D^\beta),
\end{align*}
where $\mu_\ast, \mu^\ast>0$ are the positive  lower and upper bounds for the function $\mu$.
Then for any $\xi=\left(\xi_{\alpha}\right)_{|\alpha|=2},\, \xi_{\alpha} \in \mathbb{R}^{2}$ the following inequality holds:
\begin{align*}
\frac{\mu_*}{2}|\xi|^{2} &\leq \sum_{|\alpha|=|\beta|=2} a_{\alpha \beta}^\mu(x) \xi_{\beta} \xi_{\alpha}\\
& =\frac{\mu_*}{2}(\xi_{11}^2+\xi_{22}^2)+(\mu-\frac{\mu_*}{2})(\xi_{11}-\xi_{22})^2+2(\mu-\frac{\mu_*}{2})\xi_{12}^2+2\mu \xi_{21}^2
\le 2 \mu^*|\xi|^ 2.
\end{align*}
Hence, $L_\mu$ is a fourth-order elliptic operator as we can simply take $\delta=\min\{\frac{\mu_\ast}{2}, \frac{1}{2\mu^\ast}, \frac12\}$.

\subsubsection{Density function}\label{subs:rho} 
Following Frolov's idea in \cite{Frolov}, we  can make an Ansatz
\begin{equation*}\label{rho}
\rho=\eta(\Phi),
\end{equation*}
where $\eta\in L^\infty(\R;[\rho_\ast,\rho^\ast])$ with $0\leq\rho_\ast\leq\rho^\ast$ is a nonnegative bounded fixed function, such that  the equation 
$$\div(\rho u)=\div(\eta(\Phi)\nabla^\perp\Phi)=0$$
holds in the distribution sense provided with e.g. $\Phi\in H^2_{\textrm{loc}}(\Omega)$. 

The choice of the function $\eta$ may depend on the prescribed boundary conditions  
$$
\rho|_{\Sigma}=\rho_0,\quad u|_{\d\Omega}=u_0,
$$
where $\Sigma\subset \d\Omega$ is a subset of the boundary $\d\Omega$. 
In the two-river-junction model in \cite{Santos1}, Santos assumed the boundary condition  of the density function as follows
\begin{equation}\label{BC:rho}
\rho|_{\sigma_j}=\rho_j\in L^\infty(\sigma_j),\quad j=1,2,
\end{equation}
where 
$$\sigma_1=\gamma((0, s_1))\hbox{ and }\sigma_2=\gamma((s_2, s_3))\subset\d\Omega=\gamma([0,2\pi)),
\quad 0<s_1<s_2<s_3<2\pi$$
 are two disadjoint boundary sets, such that
\begin{equation*}
(u_0\cdot n)|_{\sigma_1\cup \sigma_2}<0 \hbox{ and }(u_0\cdot n)|_{\gamma([s_1, s_2])}=0.
\end{equation*}
Then the boundary value $\Phi_0$ given in \eqref{Phi0}: $\Phi_0(\gamma(s))=-\int_0^s u_0\cdot nd\theta$ is strictly increasing on $\sigma_1$ and $\sigma_2$, while is constant on the interval $\gamma([s_1, s_2])$.
We can then choose $\eta\in L^\infty(\R; [0,\infty))$ such that 
$$
\eta=\rho_j\circ \Phi_0^{-1} (\hbox{ i.e. }\eta\circ\Phi_0=\rho_j) \hbox{ on }\sigma_j,\quad j=1,2.
$$ 

If $\Sigma=\emptyset$, that is, we do not assume any boundary condition on the density function, we can take any fixed bounded function $\eta\in L^\infty(\R; [0,\infty))$.
We are going to consider this case in this article, and we will search for the solutions for \eqref{BVP} of Frolov's form \eqref{solution0} below
\begin{equation}\label{Fro}(\rho, u)=\bigl(\eta(\Phi), \,\nabla^\perp \Phi\bigr).
\end{equation}
Our results can be generalised straightforward to the above two-river-junction models, which we omit here for the simplicity of presentation. 

\subsubsection{Weak solutions}\label{subs:weak}
In this paragraph we give the definitions of weak solutions to the boundary value problems \eqref{BVP} and \eqref{BVP:Phi} respectively.
We first recall the trace theorem and inverse trace theorem (see e.g. \cite[Section 4.2]{Hsiao}) below.
\begin{theorem}[Trace theorem \& Inverse trace theorem]\label{trace} 
 \begin{enumerate}
 \item 
 Let $\Omega$ be a $C^1$-domain. Then there exists a linear continuous trace operator 
\begin{align*}
\gamma_0: H^1(\Omega)\mapsto H^{\frac12}(\d\Omega), 
\end{align*}
which is an extension of  $\gamma_0 u=u|_{\d\Omega} \hbox{ for }u\in C^0(\overline\Omega)$,
and   there exists a linear continuous right inverse $\Gamma_0$ to $\gamma_0$ with
\begin{align*}
&\Gamma_0:  H^{\frac12}(\d\Omega)\mapsto H^1(\Omega)
\hbox{ and }\gamma_0(\Gamma_0 (u_0))=u_0,
 \hbox{ for all }u_0\in   H^{\frac12}(\d\Omega).
\end{align*}

 \item
Let  $\Omega$ be a $C^{1,1}$ -domain. 
Then there exist two linear continuous trace operators 
\begin{align*}
\gamma_0: H^2(\Omega)\mapsto H^{\frac32}(\d\Omega),\quad \gamma_1: H^2(\Omega)\mapsto H^{\frac12}(\d\Omega),
\end{align*}
which are extensions of 
\begin{align*}
\gamma_0 \Phi=\Phi|_{\d\Omega} \hbox{ for }\Phi\in C^0(\overline\Omega),
\quad \gamma_1 \Phi=\frac{\d\Phi}{\d n}\Big|_{\d\Omega} \hbox{ for }\Phi \in C^3(\overline\Omega).
\end{align*} 
Inversely, there exists a linear continuous right inverse $\Gamma_1$ to $(\gamma_0, \gamma_1)$ with
\begin{align*}
&\Gamma_1: H^{\frac32}(\d\Omega)\times H^{\frac12}(\d\Omega)\mapsto H^2(\Omega)
\hbox{ and }\gamma_j(\Gamma _1(\Phi_0, \Phi_1))=\Phi_j,\quad j=0,1,
\\
&\hbox{ for all }(\Phi_0, \Phi_1)\in H^{\frac32}(\d\Omega)\times H^{\frac12}(\d\Omega).
\end{align*}
 \end{enumerate}
\end{theorem}

 
\begin{definition}[Weak solutions of the Navier--Stokes equations]\label{weaksolution}
Let $\Omega\subset\R^2$ be a bounded simply connected $C^{1}$ domain. We say that a pair $(\rho, u)\in L^\infty(\Omega;[0,\infty))\times H^1(\Omega;\R^2)$ is a weak solution of the boundary value problem \eqref{BVP}  with the given data $u_0\in H^{\frac12}({\partial\Omega};\R^2)$, $f\in H^{-1}(\Omega;\R^2)$, if $\div u=0$, $\div(\rho u)=0$ hold in $\Omega$ in the distribution sense, $u_0=u|_{\partial\Omega}$ is the trace of $u$ on $\partial\Omega$ and the following integral identity
\begin{equation}\label{SNSweak}
\frac12\int_{\Omega}\mu Su:Sv\,dx=\int_{\Omega} \rho( u\otimes u):\nabla v\,dx+\langle f, v\rangle_{H^{-1}(\Omega), H^1_0(\Omega)},
\end{equation} 
holds for all $v\in H^1_0(\Omega; \R^2)$ with $\div v=0$.  
Here  $A:B\defeq\sum_{i,j=1}^2 A_{ij}B_{ij}$ for the matrices $A=(A_{ij})_{1\leq i,j\leq 2}$ and $B=(B_{ij})_{1\leq i,j\leq 2}$.
\end{definition}
 
\begin{definition}[Weak solutions of the   elliptic equation]
Let $\Omega\subset\R^2$ be a bounded simply connected $C^{1,1}$ domain.
Let $\eta\in L^\infty(\R;[0,\infty))$ and $b\in C(\R;[\mu_\ast, \mu^\ast])$ be two given functions.

We say that   $\Phi\in H^2(\Omega)$ is a weak solution of the boundary value problem \eqref{BVP:Phi} with the given data $\Phi_0\in H^{\frac32}(\partial\Omega)$, $\Phi_1\in H^{\frac12}(\d\Omega)$, $f\in H^{-1}(\Omega;\R^2)$, if  
$\Phi_0=\Phi|_{\partial\Omega}$ and $\Phi_1=\frac{\d\Phi}{\d n}\Big|_{\d\Omega}$ in the trace sense and the following integral identity
\begin{equation}\label{ellipticweak}\begin{split}
&\int_{\Omega}\mu
\Bigl( (\d_{x_2x_2}\Phi-\d_{x_1x_1}\Phi) (\d_{x_2x_2}\psi-\d_{x_1x_1}\psi)
+ (2\d_{x_1x_2}\Phi)(2\d_{x_1x_2}\psi)\Bigr)\,dx
\\
&= \langle f,  \nabla^\perp\psi\rangle_{H^{-1}(\Omega), H^1_0(\Omega)}+\int_{\Omega}\rho(\nabla^\perp\Phi\otimes\nabla^\perp\Phi):\nabla\nabla^\perp\psi\,dx,
\end{split}
\end{equation} 
holds for all $\psi\in H^2_0(\Omega)$, where $\rho=\eta(\Phi)$ and $\mu=b(\rho)$.
\end{definition}

 Since for any $v\in H_0^1(\Omega; \R^2)$ defined on a bounded connected $C^{1,1}$ domain $\Omega$   with $\div v=0$, there exists a corresponding stream function $\psi\in H_0^2(\Omega)$   such that $v=\nabla^\perp\psi$, the equality \eqref{ellipticweak} hence implies the equality \eqref{SNSweak} with $u=\nabla^\perp\Phi$.  
 Therefore we have the following fact which passes the solvability of the elliptic equation \eqref{BVP:Phi} to the solvability of the Navier-Stokes system \eqref{BVP}.  
\begin{lemma}\label{fact}
Let $\Omega\subset\R^2$ be a bounded connected $C^{1,1}$ domain. 
Let  $\eta\in L^\infty(\R;[0,\infty))$, $b\in C(\R;[\mu_\ast, \mu^\ast])$ with $0<\mu_\ast\leq \mu^\ast$ and $f\in H^{-1}(\Omega; \R^2)$ be  given.
Let $u_0\in H^{\frac12}(\d\Omega;\R^2)$ satisfy zero-flux condition  \eqref{u0} and let $\Phi_0\in H^{\frac32}(\d\Omega), \Phi_1\in H^{\frac12}(\d\Omega)$ be given in \eqref{Phi0} in terms of $u_0$ and some fixed constant $C_0\in\R$.

If $\Phi\in H^2(\Omega)$ is a weak solution of the boundary value problem \eqref{BVP:Phi},
then   the pair of  Frolov's form \eqref{Fro}:  
$(\rho, u)=\bigl(\eta(\Phi), \,\nabla^\perp \Phi\bigr)$ 
 is a weak solution of the boundary value problem \eqref{BVP}.
 
\end{lemma}
 
\subsubsection{Existence and regularity results}\label{subs:ex,re}
Our main   theorem concerning the existence and the regularity properties of the weak solutions to the Navier-Stokes system \eqref{SNS} as well as to the elliptic equation \eqref{elliptic} reads as follows.  
\begin{theorem}[Existence and regularity results]\label{thm}
Let $\eta\in L^\infty(\R;[0,\infty))$,
 $b\in C(\R;[\mu_\ast, \mu^\ast])$, $0<\mu_\ast\leq \mu^\ast$ be given.
 Let $\Omega\subset\R^2$ be a bounded simply connected $C^{1,1}$ domain.
Let $f\in H^{-1}(\Omega;\R^2)$ be given.
\begin{enumerate}[(i)]
\item     Then for any $\Phi_0\in H^{\frac32}(\partial\Omega), \Phi_1\in H^{\frac12}(\d\Omega)$,  there exists at least one weak solution   $\Phi\in H^2(\Omega)$  of the boundary value problem \eqref{BVP:Phi}.

\item
Let $C_0\in\R$ and $u_0\in H^{\frac12}(\partial\Omega; \R^2)$ with $\int_{\d\Omega}u_0\cdot n=0$. 
If $\Phi_0\in H^{\frac32}(\partial\Omega), \Phi_1\in H^{\frac12}(\d\Omega)$ are given by \eqref{Phi0} and $\Phi\in H^2(\Omega)$  is a weak solution of \eqref{BVP:Phi}, then the pair of Frolov's form
\begin{equation}\label{pair}
(\rho, u )=\bigl(\eta(\Phi), \,\nabla^\perp \Phi \bigr)
\end{equation}
  is a weak solution of the boundary value problem \eqref{BVP} with $u\in H^1(\Omega; \R^2)$. 


\end{enumerate} 

\medskip

Furthermore, we have the following regularity results under additional smoothness assumptions.
\begin{enumerate}[(1)]
\item If $\Omega$ is a  connected $C^{2,1}$ domain, 
 the function $\eta$ is taken to be continuous and $f\in L^2(\Omega;\R^2)$,
 then for any $ \Phi_0\in H^{\frac52}(\d\Omega), \Phi_1\in H^{\frac32}(\d\Omega)$ (resp. $u_0\in H^{\frac32}(\partial\Omega; \R^2)$) the weak solution $\Phi$ (resp. $u$) given in (i) (resp. (ii)) belongs to $W^{2,p}(\Omega)$ (resp.   $W^{1,p}(\Omega; \R^2)$), for all $p\in [1,\infty)$. 

\item Let $k\geq 2$ be an integer. If $\Omega$ is a connected $C^{k+1,1}$ domain, the functions $\eta, b\in C^k_b(\R)=\{h\in C^k(\R)\,|\,\|h^{(j)}\|_{L^\infty}<\infty,\,\forall 0\leq j\leq k\}$ and $f\in H^{k-1}(\Omega;\R^2)$, then  for any $\Phi_0\in H^{k+\frac32}(\d\Omega), \Phi_1\in H^{k+\frac12}(\d\Omega)$ (resp. $u_0\in H^{k+\frac12}(\d\Omega)$),  the weak solution $\Phi$ (resp. $u$) given in (i) (resp. (ii)) belongs to $W^{k+1,p}(\Omega)$ (resp. $W^{k,p}(\Omega; \R^2)$) for all $1\leq p<\infty$.
In particular, if $k=2$, then $u\in W^{2,p}(\Omega;\R^2)$, $p>2$ is Lipschitz continuous.

\end{enumerate}
\end{theorem}  

Theorem \ref{thm} will be proved in Section \ref{proof}, where we will  follow J. Leray's approach in \cite{Leray} for the resolution of the classical stationary Navier-Stokes equation. 
    By virtue of the above Lemma \ref{fact}, it remains to study  the fourth-order nonlinear elliptic equation \eqref{BVP:Phi} for the stream function $\Phi$.
Compared to the classical case, we here have to  pay more attention on the nonlinear dependence of the density $\rho$ and the viscosity coefficient $\mu$ on  $\Phi$.

For the completeness of the results, we will establish the existence and regularity results in the exterior domain and the whole plane cases in Appendix \ref{app:ex}.      

\begin{remark} 

We can also consider the system \eqref{BVP} in bounded domains of other types, following the arguments for the classical Navier--Stokes equations \eqref{CNS}. 
For example, it is obvious that the existence and regularity results in Theorem \ref{thm} hold true on a bounded multi-connected domain $\cup_{i=1}^n\Omega_i$, under zero flux assumption on the boundary of each connected component \eqref{ui}. 

 The existence result in Theorem \ref{thm} can also be easily extended to the strip domain $\R\times[0,1]$ by use of Poincaré inequality. 

We can follow the idea in \cite{Guillod3} by J. Guillod and P. Wittwer for \eqref{CNS} on the half plane, to show the solvability of \eqref{BVP} on the half plane $\R\times[0,\infty)$ by assuming small boundary value $\|u_0\|_{L^\infty}$  on the  unbounded boundary $\R\times\{0\}$.

\end{remark}


\subsection{Symmetric solutions}\label{subs:symmetry}
We turn to study the stationary Navier-Stokes equations \eqref{SNS} under some symmetry assumptions on the density function in this subsection.

We give first an observation when we write the velocity vector field $u=\nabla^\perp \Phi$ in terms of the stream function $\Phi$.
Let $U\subset\R^2$ be an open set and we consider another coordinate system $(y_1,y_2)$   on it.
We suppose that the Jacobian $\nabla_x y=(\frac{\d y_i}{\d x_j})_{1\leq i,j\leq 2}$ is not degenerate  and we consider the stationary Navier-Stokes system \eqref{SNS} on $U$. 
If   the density function depends only on $y_1$
$$
\rho=\alpha(y_1),
$$
and $\alpha'\neq 0$ does not vanish, then, by formal calculations, the equation
$$
0=\div(\rho u)=\div(\rho\nabla^\perp\Phi)=\alpha'(\nabla_x y_1\cdot\nabla_x^\perp y_2)\d_{y_2}\Phi=\alpha' \det\bigl(\nabla_x y\bigr)\, \d_{y_2}\Phi
$$
implies that $\Phi=\beta(y_1)$ depends also only on $y_1$ on $U$.
Nevertheless it is not necessary that there exists a function $\eta$ such that $\rho=\eta(\Phi)$.
Similarly, if $\Phi$ depends only on $y_1$
$$
\Phi=\beta(y_1),
$$
and $\beta'\neq 0$ does not vanish, then $\rho=\alpha(y_1)$ depends also only on $y_1$ and $\rho=\eta(\Phi)$ with $\eta=\alpha\circ \beta^{-1}$. 
In this case the pair $(\rho, u)=(\alpha(y_1), \nabla^\perp_x (\beta(y_1)))$ is   of Frolov's form \eqref{pair}.

We formulate the solutions (of Frolov's form) to the stationary Navier-Stokes system \eqref{SNS}  when assuming certain symmetries on the density function in the following theorem.
In particular, the Couette flow between a parallel channel, the concentric flow between concentric rotating circles, and the radial flow (also called the Jeffery-Hamel flow) between two nonparallel converging/diverging lines are described. Some explicit solutions of parallel, concentric and radial flows with piecewise-constant
viscosity coefficients will be given in Appendix \ref{example}.
\begin{theorem}[Formulation for the parallel, concentric and radial flows]\label{thm2} 
If the density function
\begin{align*}
\rho=\rho(x_2)\hbox{ in }\R^2,
\,\,\hbox{ or }\,\,\rho(r)\hbox{ in }\R^2\backslash\{0\},
\,\,\hbox{ or }\,\,\rho(\theta)\hbox{ in }\R^2\backslash\{0\},
\,\,\hbox{ with }\rho'\neq 0,
\end{align*}
where $(r,\theta)$ are polar coordinates in $\R^2$, then  the velocity vector field $u$ of the stationary Navier--Stokes equations \eqref{SNS} reads correspondingly as
\begin{equation}\label{u:symmetry}
u=u_1(x_2)\,e_1\hbox{ in }\R^2,
\,\,\hbox{ or }\,\,
rg(r)\,e_\theta\hbox{ in }\R^2\backslash\{0\},
\,\hbox{ or }\,
\frac{h(\theta)}{r}e_r\hbox{ in }\R^2\backslash\{0\},
\end{equation}
where $e_1=\begin{pmatrix}1\\0\end{pmatrix}$, $e_r=\begin{pmatrix}\frac{x_1}{r}\\\frac{x_2}{r}\end{pmatrix}$, $e_\theta=\begin{pmatrix}\frac{x_2}{r}\\-\frac{x_1}{r}\end{pmatrix}$.

Let  the external force $f=0$ in the system \eqref{SNS}, then  the scalar functions $u_1, g, h$ above satisfy the following three ordinary differential equations of second order respectively  
\begin{align}\label{ODE}
&
 \partial_{x_2}(\mu\partial_{x_2}u_1) =C,\notag
 \\
 &  \d_r( \mu r^3 \d_r g) = -Cr,\\
 &\rho h^2 +\d_\theta(\mu \d_\theta h)+4(\mu h)=C, \notag
\end{align}
where $C\in\R$ can be arbitrarily chosen.
Correspondingly the stream function
$$
\Phi=\Phi(x_2)\,\,\hbox{ or }\,\,\Phi(r)\,\,\hbox{ or }\,\,\Phi(\theta)
$$
satisfies the following elliptic equations of fourth order respectively
\begin{align}\label{eq:Phi}
&\d_{x_2x_2}(\mu \d_{x_2x_2}\Phi)=0,\notag
\\
&\d_{rr}\Bigl(\mu r^3\d_r(\frac1r\d_r\Phi)\Bigr)=-C,
\\
&\d_{\theta\theta}(\mu \d_{\theta\theta}\Phi)
+\d_{\theta}\bigl(\rho(\d_\theta\Phi)^2+4\mu\d_\theta\Phi\bigr)=0.\notag
\end{align}
\end{theorem}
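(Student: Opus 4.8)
The plan is to treat the three symmetry classes uniformly: first identify the stream function as a function of a single variable, then read off the velocity field, and finally extract the second-order ODE by substituting the ansatz into the momentum equation and using the compatibility (single-valuedness) of the pressure. For the reduction of $\Phi$ I would invoke the observation recorded just before the theorem with the coordinate choices $(y_1,y_2)=(x_2,x_1)$ in the parallel case, the polar change $(y_1,y_2)=(r,\theta)$ in the concentric case (whose Jacobian is $r\neq0$ on $\R^2\setminus\{0\}$), and $(y_1,y_2)=(\theta,r)$ in the radial case. In each instance $\rho$ is a function of $y_1$ alone with $\rho'\neq0$, so the observation forces the stream function $\Phi$ (which exists via \eqref{Phi}) to depend on $y_1$ alone, i.e. $\Phi(x_2)$, $\Phi(r)$, or $\Phi(\theta)$. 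Differentiating $u=\nabla^\perp\Phi$ then gives exactly the three fields in \eqref{u:symmetry}, with $u_1=\Phi'$, $g=\Phi'(r)/r$, and $h=\Phi'(\theta)$.

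Next I would derive the ODEs \eqref{ODE}. Substituting the ansatz into the first equation of \eqref{SNS} with $f=0$ and using $\di(\rho u)=0$ to write the convective term as $\rho(u\cdot\nabla)u$, I would express both the convection and the viscous stress $\di(\mu Su)$ in the adapted orthonormal frame. In each case one scalar component of the momentum balance fixes the derivative of $\Pi$ in one direction while the transverse component fixes it in the other; imposing that these are the two partial derivatives of a single function $\Pi$ (equality of mixed second derivatives, equivalently closedness of $\nabla\Pi$) separates the variables, and the separation constant is precisely the free parameter $C$. For instance, in the concentric case the radial balance gives $\partial_r\Pi=\rho v^2/r$ with $v=\Phi'$, while the azimuthal balance gives $\tfrac1r\partial_\theta\Pi=\tfrac1{r^2}\partial_r(\mu r^3\partial_r g)$; compatibility forces $\tfrac1r\partial_r(\mu r^3\partial_r g)$ to be constant, i.e. $\partial_r(\mu r^3\partial_r g)=-Cr$. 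The same mechanism yields $\partial_{x_2}(\mu\partial_{x_2}u_1)=C$ in the parallel case and $\rho h^2+\partial_\theta(\mu\partial_\theta h)+4\mu h=C$ in the radial case; the nonlinear term $\rho h^2$ survives only in the last case, because only there is $(u\cdot\nabla)u$ not a gradient.

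The fourth-order equations \eqref{eq:Phi} then follow in two equivalent ways. Differentiating each ODE in \eqref{ODE} once in its single variable reproduces \eqref{eq:Phi} (turning $C$ into $0$ in the first and third cases and into $-C$ in the second). More structurally, one may instead specialize the equation obtained by applying $\nabla^\perp\cdot$ to the momentum equation, namely \eqref{elliptic} with $f=0$, i.e. $L_\mu\Phi=\nabla^\perp\cdot\di(\rho\nabla^\perp\Phi\otimes\nabla^\perp\Phi)$, to the symmetric ansatz; this requires writing the fourth-order operator $L_\mu$ and the quadratic right-hand side in the adapted coordinates, a computation I would cross-check against the constant-viscosity identity $L_\nu=\nu\Delta^2$.

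The main obstacle is the radial (Jeffery--Hamel) case: with $\mu=b(\rho(\theta))$ genuinely $\theta$-dependent, all three strain components $S_{rr},S_{\theta\theta},S_{r\theta}$ are nonzero, and one must carry the variable-coefficient divergence of $\mu Su$ through polar coordinates carefully; the cancellations that leave exactly $\partial_\theta(\mu\partial_\theta h)+4\mu h$ in the azimuthal and radial balances are delicate and are where an error is most likely. A secondary, interpretational point I would make explicit is that in the two rotational cases the separation constant corresponds to a pressure of the form $\Pi=-C\theta+(\text{single-valued})$, which is admissible precisely because only $\nabla\Pi$, and not $\Pi$ itself, must be single-valued on the non-simply-connected punctured plane or annulus; this justifies that a nonzero $C$ is genuinely allowed.
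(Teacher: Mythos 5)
Your proposal is correct and follows essentially the same route as the paper: the paper's three propositions deduce the form of $u$ directly from $\div(\rho u)=\nabla\rho\cdot u=0$ together with $\div u=0$ (the stream-function observation you invoke is just the equivalent reformulation stated immediately before the theorem), and each ODE is obtained by exactly your pressure-compatibility/separation-of-variables argument, with $C$ arising as the separation constant, i.e.\ the coefficient of the admissible multivalued $\theta$-part of $\Pi$ in the rotational cases. The polar-coordinate expressions for $\div(\mu Su)$ that you flag as the delicate step are precisely the identities \eqref{nonlinearity} and \eqref{nonlinearity:theta} computed in the paper, and the fourth-order equations \eqref{eq:Phi} are indeed obtained by substituting $u_1=\Phi'$, $g=\Phi'/r$, $h=\Phi'$ and differentiating each ODE once, as you say.
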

\begin{remark}\label{rmk2}  
In the case $\rho=\rho(x_2)$ or $\rho=\rho(r)$, the velocity vector field $u$  is related only to the viscosity coefficient $\mu$ (while not $\rho$).
 Under some Dirichlet boundary conditions the above ODEs \eqref{ODE} with  given functions $\rho, \mu$ can be solved up to a real constant, and hence there are uncountably many solutions to the corresponding boundary value problems of the system \eqref{SNS}. 
  

\end{remark}
\begin{proof}[Proof of Theorem \ref{thm2}]
We are going to consider the cases $\rho=\rho(x_2)$, $\rho=\rho(r)$ and $\rho=\rho(\theta)$ separately.
{}
We notice that if we take the polar coordinate $(r,\theta)$ on the plane $\R^2$, with
$$(x_1, x_2)=(r\cos \theta, r\sin\theta),$$
then
$$\nabla_x=e_r\d_r-\frac{e_\theta}{r}\d_{\theta},
\quad 
 \nabla^\perp_x=\frac{e_r}{r}\d_{\theta}+e_\theta\d_r,
 \hbox{ with }e_r=\begin{pmatrix}\frac{x_1}{r}\\\frac{x_2}{r}\end{pmatrix},
 \quad e_\theta=\begin{pmatrix}\frac{x_2}{r}\\-\frac{x_1}{r}\end{pmatrix}.$$

\noindent \textbf{Case $\rho=\rho(x_2)$}

If $\rho=\rho(x_2)$ with $\rho'\neq 0$, then the equations $\div(\rho u)=0$ and $\div u=0$ imply that $u_2=0$ and $\partial_{x_1}u_1=0$. Thus $u_1=u_1(x_2)$. Hence
\begin{equation}\label{nonlinearity:x2}
\rho (u\cdot\nabla)u=0\in\R^2,
 \quad  \div(\mu(S u))= 
  \partial_{x_2}(\mu\partial_{x_2}u_1)\,e_1,
  \quad \Delta u=(\d_{x_2x_2}u_1)\,e_1.
  \end{equation}

If $f=0$, then the system \eqref{SNS}   reads as 
$$
\begin{pmatrix}
-\d_{x_2}(\mu\d_{x_2}u_1)+\d_{x_1}\Pi
\\
\d_{x_2}\Pi\end{pmatrix}=\begin{pmatrix}0\\0\end{pmatrix}.
$$
The equation $\d_{x_2}\Pi=0$ implies $\Pi=\Pi(x_1)$.
Thus there exists a constant $C\in \R$ such that
$$ \partial_{x_2}(\mu\partial_{x_2}u_1) = -\d_{x_1}\Pi =C.$$ 

\noindent\textbf{Case $\rho=\rho(r)$}

If $\rho=\rho(r)$ with $\rho'\neq 0$, then the equations $\div(\rho u)=0$ and $\div u=0$ imply that $u\cdot e_r=0$ and hence $u=g_1(r,\theta)e_\theta$ for some scalar function $g_1$.
The incompressibility $\div u=0$ then implies $(\d_r g_1) e_r\cdot e_\theta-(\d_\theta g_1)\frac{e_\theta}{r}\cdot e_\theta=0$, that is, $\d_\theta g_1=0$. 
 Thus $u=g_1(r)e_\theta$.

Let 
$$
g(r)=\frac{g_1(r)}{r},
\hbox{ such that }u=rg(r)e_\theta,
$$
then it is straightforward to calculate
\begin{align*}
&\nabla u=\begin{pmatrix} rg'\frac{x_1x_2}{r^2} & g+rg'\frac{x_2^2}{r^2}\\ -g-rg'\frac{x_1^2}{r^2} & - rg'\frac{x_1x_2}{r^2} \end{pmatrix},
\quad
 S u=\nabla u +\nabla^{T} u=rg'\begin{pmatrix} 2\frac{x_1x_2}{r^2} & \frac{x_2^2-x_1^2}{r^2} \\ \frac{x_2^2-x_1^2}{r^2}& -2\frac{x_1x_2}{r^2} \end{pmatrix},   
\end{align*} 
and
 \begin{equation}\label{nonlinearity}\begin{split}
& \rho (u\cdot\nabla)u=-r\rho g^2 e_r,
 \quad  \div(\mu(S u))= \frac{\d_r(r^3 \mu \d_r g)}{r^2} e_\theta,
 \\
&\Delta u= (r\d_{rr}g+3\d_rg) e_\theta.
\end{split} \end{equation}

If $f=0$, then the system \eqref{SNS}  reads as
\begin{equation}\label{SNS:r}
\bigl(-r\rho g^2+\partial_r\Pi\bigr)e_r
+\bigl( -\frac{\d_r(r^3 \mu \d_r g)}{r^2}-\frac1r\partial_\theta\Pi\bigr)e_\theta=0.
\end{equation}
Since $\mu=\mu(r)$ and $g=g(r)$, we derive from the above equation \eqref{SNS:r} in the $e_\theta$-direction that $\partial_{\theta}\Pi=\alpha(r)$,  where $\alpha$ is a function depending only on $r$. Then $\Pi$ has the form $\Pi(r,\theta)=\alpha(r)\theta+\beta(r)$, where $\beta$ is a function depending only on $r$. 
The above equation \eqref{SNS:r} in the $e_r$-direction  implies that $\partial_r\Pi$ depends only on $r$ and hence $\alpha(r)=C$ is a constant, such that  
\begin{equation*} 
\Pi(r,\theta)=C\theta+\beta(r).
\end{equation*}  
We substitute $\d_{\theta}\Pi=C$ into the equation \eqref{SNS:r} to obtain $\eqref{ODE}_2$. \\

\noindent\textbf{Case $\rho=\rho(\theta)$}

If $\rho=\rho(\theta)$ with $\rho'\neq 0$, then the equations $\div(\rho u)=0$ and $\div u=0$ imply that $u\cdot e_\theta=0$ and hence $u=h_1(r,\theta)e_r$ for some scalar function $h_1$.
The incompressibility $\div u=0$ then implies 
$$\d_r h_1+\frac1r h_1=0.$$  
 Thus $h_1(r,\theta)=\frac{h(\theta)}{r}$ and $u=\frac{h(\theta)}{r}e_r$.
It is straightforward to calculate
\begin{align*}
&\nabla u=\frac1{r^4}\begin{pmatrix} 
-(x_1^2-x_2^2)h- x_1x_2 h' & -2 x_1x_2h+ x_1^2h'\\ 
-2 x_1x_2 h- x_2^2 h' & ( x_1^2-x_2^2) h+ x_1x_2h' 
\end{pmatrix}, 
\\
& S u=\nabla u+\nabla^Tu=\frac{1}{r^4}
 \begin{pmatrix} -2(x_1^2-x_2^2)h-2x_1x_2h'  &-4x_1x_2 h+(x_1^2-x_2^2)h' \\ -4x_1x_2 h+(x_1^2-x_2^2)h' & 2(x_1^2-x_2^2)h+2x_1x_2h'  \end{pmatrix},   
\end{align*}  
and
 \begin{equation}\label{nonlinearity:theta}\begin{split}
& \rho (u\cdot\nabla)u= -\rho\frac{h^2}{r^3} e_r,
 \quad  \div(\mu(S u))= \frac{\d_\theta (\mu \d_\theta h)}{r^3}e_r- 
2\frac{\d_\theta(\mu h)}{r^3}e_\theta,
\\
&  \Delta u= \frac{\d_{\theta\theta}h}{r^3} e_r-2\frac{\d_\theta h}{r^3}e_\theta.
\end{split} \end{equation}
 Thus the system \eqref{SNS} with $f=0$ reads as
\begin{equation}\label{SNS:theta}
\Bigl(-\rho\frac{h^2}{r^3} -\frac{\d_\theta(\mu\d_\theta h)}{r^3}+\partial_r\Pi\Bigr)e_r
+\Bigl( 2\frac{\d_\theta(  \mu h)}{r^3}-\frac1r\partial_\theta\Pi\Bigr)e_\theta=0.
\end{equation}
We derive from the above equation \eqref{SNS:theta} in the $e_\theta$-direction that $\partial_{\theta}\Pi=2r^{-2}\d_\theta(\mu h)$.
Since $\mu=\mu(\theta)$ and $h=h(\theta)$, $\Pi$ has the form
$$
\Pi(r,\theta)=2r^{-2}(\mu h)+\alpha(r),
$$
  where $\alpha$ is a function depending only on $r$. 
  We substitute $\d_r \Pi=-\frac{4}{r^3}(\mu h)+\alpha'(r)$ into \eqref{SNS:theta} to derive
\begin{equation*} 
\rho h^2 +\d_\theta(\mu \d_\theta h)+4(\mu h) = r^3 \alpha'(r), 
\end{equation*} 
where the left-hand side depends only on $\theta$ and the right-hand side depends only on $r$.
Hence there exists $C\in\R$ such that $\eqref{ODE}_3$ holds.

\end{proof}

We have the following  irregularity results, as a straightforward consequence from Theorem \ref{thm2}.  
\begin{corollary}[Irregularity results with piecewise-constant viscosity coefficients]\label{corollary}
For the parallel, concentric and radial flows formulated in Theorem \ref{thm2} above, if we assume that the viscosity coefficient 
\begin{equation}\label{coro:mu}
\mu=\mu(x_2),\,\,\hbox{ or }\,\,\mu(r),\,\,\hbox{ or }\mu(\theta)\,\,\hbox{ is a step function jumping at }
a\in (0,2\pi),
\end{equation}
$\rho, \mu$ have positive lower and upper bounds, and that
\begin{align}\label{coro:assumption}
& \d_{x_2} u_1\in L^1_\loc(\R),\,\,\hbox{ or }\,\,  \d_r g\in L^1_\loc(\R^+),\,\,\hbox{ or }\,\,\, h\hbox{ and }  \d_\theta h\in L^1_\loc([0,2\pi))
\\
&\hbox{do not vanish in a neighborhood }U_a \hbox{ of }a,\notag
\end{align}
then
\begin{align*}
&\Delta u=(\d_{x_2x_2}u_1)\,e_1 \not\in L^1_\loc(\R^2), 
\\
& \hbox{ or }\,\, (r\d_{rr}g+3\d_rg) e_\theta \not\in L^1_\loc(\R^2\backslash\{0\}), 
\\
& \hbox{ or } \,\,  \frac{\d_{\theta\theta}h}{r^3} e_r-2\frac{\d_\theta h}{r^3}e_\theta \not\in L^1_\loc(\R^2\backslash\{0\}).
\end{align*}
In the case of radial flow $(\rho, u)=(\rho(\theta), \frac{h(\theta)}{r}e_r)$,  we also have
$$
 \div(\mu Su)=\frac{\d_\theta (\mu \d_\theta h)}{r^3}e_r
 - 
2\frac{\d_\theta(\mu h)}{r^3}e_\theta\not\in L^1_\loc(\R^2\backslash\{0\}).
$$ 
\end{corollary}
\begin{proof}
If the viscosity coefficient $\mu=\mu(x_2)$ or $\mu(r)$ or $\mu(\theta)$ is a step function with the jump point at $a$, then $\mu'$ is the delta distribution $\delta_a$ (up to a constant) which does not belong to $L^1(U_a)$, with $U_a$ a neighborhood of $a$.
The expressions for $\Delta u$, $\div(\mu Su)$ in Corollary \ref{corollary} can be found in \eqref{nonlinearity:x2}, \eqref{nonlinearity} and \eqref{nonlinearity:theta} above.
  
  We assume by contradiction that 
\begin{align*}
 & \Delta u=(\d_{x_2x_2}u_1)e_1\in L^1_\loc(\R^2),
 \\
 &\hbox{ or }\,\,
  (r\d_{rr}g+3\d_rg) e_\theta  \in L^1_\loc(\R^2\backslash\{0\}),
  \\
  &\hbox{ or }\,\,
  \frac{\d_{\theta\theta}h}{r^3} e_r-2\frac{\d_\theta h}{r^3}e_\theta  \in L^1_\loc(\R^2\backslash\{0\}),
  \end{align*}
then by the assumptions \eqref{coro:assumption} we have
 \begin{align*}
&\d_{x_2}u_1\in W^{1,1}_\loc(\R)\subset L^\infty_\loc(\R),
  \\
  &\hbox{ or }\,\,
  \d_r g\in W^{1,1}_\loc(\R^+)\subset L^\infty_\loc(\R^+),
   \\
   &\hbox{ or }\,\,
    h, \d_\theta h\in W^{1,1}_\loc ([0,2\pi))\subset L^\infty_\loc([0,2\pi)).
\end{align*}
  Thus by the ODEs \eqref{ODE} and the assumption \eqref{coro:assumption}, in the neighborhood $U_a$,
  \begin{align*}
  &\d_{x_2}\mu=\frac{1}{\d_{x_2}u_1}(C-\mu\d_{x_2x_2}u_1)\in L^1_\loc(U_a),
     \\
     &\hbox{ or }\,\,
      \d_r\mu=\frac{1}{r^3\d_r g}\bigl( -Cr-\mu\d_r(r^3\d_r g)\bigr)\in L^1_\loc(U_a),
      \\
&   \hbox{ or }\,\,
            \d_\theta\mu=\frac{1}{\d_\theta h}\bigl( C-4\mu h-\rho h^2-\mu\d_{\theta\theta}h\bigr)\in L^1_\loc(U_a).
  \end{align*}
  This is a contradiction to \eqref{coro:mu}.
  
Similarly, in the case of radial flow $(\rho, u)=(\rho(\theta), \frac{h(\theta)}{r}e_r)$, if we assume by contradiction that 
$$\div(\mu Su)=\frac{\d_\theta (\mu \d_\theta h)}{r^3}e_r
 - 
2\frac{\d_\theta(\mu h)}{r^3}e_\theta \in L^1_\loc(\R^2\backslash\{0\}),$$
 then by the ODE $\eqref{ODE}_3$ and the assumptions \eqref{coro:assumption} we have
$$
\d_\theta(\mu\d_\theta h)=C-4\mu h-\rho h^2\in L^1_\loc([0,2\pi)),
\hbox{ and hence }\d_\theta(\mu h)\in L^1_\loc([0,2\pi)),
$$
which implies the following which is a contradiction to \eqref{coro:mu}:
$$
\d_\theta\mu=\frac{1}{h}(\d_\theta(\mu h)-\mu \d_\theta h)\in L^1_\loc(U_a).
$$ 

\end{proof}

\begin{remark}\label{rmk:irregulartiy}
It is also straightforward to see that the (first-order) derivative of
\begin{align*}
   & \mu \Delta\Phi=\mu(\d_{x_2}u_1-\d_{x_1}u_2),
   \\
   &\hbox{ or }\mu\d_{x_2x_2}\Phi=\mu\d_{x_2}u_1,
   \\
   &\hbox{ or }\mu\d_{x_1x_1}\Phi=-\mu\d_{x_1}u_2,
   \\
&    \hbox{ or }\mu(\d_{x_2x_2}\Phi-\d_{x_1x_1}\Phi)=\mu(\d_{x_2}u_1+\d_{x_1}u_2),
\\
&    \hbox{ or }\mu(\d_{x_1x_2}\Phi)=\mu(\d_{x_1}u_1)
\end{align*}
is not always locally integrable for piecewise-constant viscosity coefficients.
\end{remark}

We   calculate explicitly some solutions to the Navier-Stokes system \eqref{SNS}  with piecewise-constant viscosity coefficients in Appendix \ref{example}, and we will see that they are indeed   of Frolov's form.

 \subsection{Further  regularity results}\label{subs:discussion}

In contrast  to the irregularity  results for the solutions of the stationary Navier-Stokes system \eqref{SNS} with  piecewise-constant viscosity coefficients (see Corollary \ref{corollary})
$$
\Delta u\not\in L^1_\loc(\R^2\backslash\{0\}),\quad \div(\mu Su)\not \in L^1_\loc(\R^2\backslash\{0\}),
$$
we should have some regularity results for the velocity gradient and the divergence-free part of the viscous term $\div(\mu Su)$
$$
\nabla u, \quad  \PP\div(\mu Su),
$$
where  $\PP$ is  the Leray-Helmholtz projector.
On the \emph{whole plane $\R^2$}, by use of Fourier transform,  any vector-valued tempered distribution $v\in \cS'(\R^2; \R^2)$ can be decomposed  into its div-free and curl-free parts separately
\begin{align*}
&v=\nabla^\perp V_1+\nabla V_2,
\\
&\hbox{with }\nabla^\perp V_1=\nabla^\perp\Delta^{-1}\nabla^\perp\cdot v=\PP v,
\quad \nabla V_2=\nabla\Delta^{-1}\nabla\cdot v= (1-\PP)v,
\end{align*}
and the Leray-Helmholtz projector $\PP$ (as Calder\'on-Zygmund operator) maps $L^p(\R^2; \R^2)$ into itself, for any $p\in (1,\infty)$.
We can also define $\PP$ on $L^p(\Omega; \R^2)$, $1<p<\infty$ where $\Omega$ is a \emph{bounded $C^1$ domain}  and we recall here briefly a possible definition (see \cite{FMM} for more details).
Let $v\in L^p(\Omega; \R^2)$ and let $\PPi_\Omega: \mathcal{E}'(\Omega)\mapsto \mathcal{D}'(\Omega)$ be the Newtonian potential operator which acts component-wise on vector fields.
We define the Leray-Helmholtz projector as follows:
\begin{equation}
\PP v=v-\nabla\div\PPi_\Omega(v)-\nabla V,
\end{equation}
where $V\in W^{1,p}(\Omega)$ solves the following Laplacian equation with Neumann boundary condition 
\begin{equation*}
\left\{\begin{array}{c}
\Delta V=0\hbox{ in }\Omega,
\\
\frac{\partial V}{\partial n}=\bigl( v-\nabla\div\PPi_\Omega(v)\bigr)\cdot n \hbox{ on }\d\Omega.
\end{array}\right.
\end{equation*}
By the results in Section 11 in  \cite{FMM}, we have the following Helmholtz-decomposition 
\begin{equation*}
L^p(\Omega;\R^2)=L^p_{\div, 0}(\Omega) \oplus \hbox{grad}W^{1,p}(\Omega),
\end{equation*}
where 
\begin{align*}
&L^p_{\div, 0}(\Omega)\defeq\{v\in L^p(\Omega; \R^2)\,|\, \div v=0,\, v\cdot n|_{\d\Omega}=0\},
\\
&\hbox{grad} W^{1,p}(\Omega)\defeq \{\nabla V\,|\, V\in W^{1,p}(\Omega)\},
\end{align*}
and the orthogonal Leray-Helmholtz projector $\PP: L^p(\Omega)\mapsto L^p_{\div, 0}(\Omega)$ is bounded and onto.

In this subsection we always consider the stationary Navier-Stokes system \eqref{SNS}  on a bounded $C^{1,1}$ domain $\Omega$, with zero external force $f=0$  (noticing $\div(\rho u\otimes u)=\rho u\cdot\nabla u$ by the density equation $\div (\rho u)=0$)
\begin{equation}\label{SNS0}
\left\{
\begin{aligned}
&\rho u\cdot\nabla u-\di(\mu Su)+\nabla\Pi= 0,\\
&\di u=0,\,\di(\rho u)=0.
\end{aligned}
\right.
\end{equation} 
We   apply the Leray-Helmholtz projector $\PP$ to the first equation of the stationary Navier-Stokes system \eqref{SNS0}  to derive  (whenever one side is well-defined)
\begin{equation}\label{PP}
\PP\div(\mu Su)=\PP(\rho u\cdot\nabla u).
\end{equation}

 We observe  also the  following (formal) one-to-one correspondence between $L_\mu\Phi$ and $\PP\di(\mu Su)$: 
  $$ L_\mu \Phi=\nabla^\perp\cdot \PP\di(\mu Su),
 \quad \PP\di(\mu Su)=\nabla^\perp \Delta^{-1} L_\mu \Phi,
$$  
where $\Phi$ denotes the stream function and we calculated straightforward in \eqref{PP,Lmu} that
\begin{align*}
    L_\mu\Phi:=(\d_{x_2x_2}-\d_{x_1x_1})\bigl( \mu (\d_{x_2x_2}-\d_{x_1x_1})\Phi\bigr)+2\d_{x_1x_2}(\mu 2\d_{x_1x_2}\Phi)=\nabla^\perp\cdot\div(\mu Su).
\end{align*} 
The equality $L_\mu\Phi=\nabla^\perp\cdot(\rho u\cdot\nabla u)$ will help to derive the regularity results below.

\subsubsection{Regularity results}
We have the following regularity results under some (partial) regularity assumptions on the viscosity coefficients (e.g. in the case of pieceweise-constant viscosity coefficients).
\begin{theorem}[$L^p$-boundedness for $\nabla u$ and  $\PP\div(\mu Su)$]\label{thm3} 
Let $\Omega$ be a  bounded $C^{1,1}$ domain.
For any weak solution $(\rho, u)\in L^\infty(\Omega)\times H^1(\Omega; \R^2)$ to the boundary value problem of the  stationary Navier-Stokes system \eqref{BVP} with zero external force $f=0$   (e.g. the solutions given in Theorem \ref{thm}), we have 
\begin{equation}\label{PP:12}
 \PP\div(\mu Su)\in L^p(\Omega; \R^2), \quad \forall p\in (1,2).
\end{equation}

 If furthermore the boundary value $u_0\in W^{1,\infty}(\d\Omega)$  and the viscosity coefficient $\mu\in [\mu_\ast, \mu^\ast]$, $\mu_\ast, \mu^\ast>0$ is a \emph{variably partially BMO coefficient} (e.g. striated pieceweise-constant viscosity coefficients as in Theorem \ref{thm2}), i.e.  there exist $R_{0} \in(0,1]$ and  $\gamma=\gamma(p, \mu_\ast,\mu^\ast)\in(0,1/20)$ such that for any $x \in \Omega$ and any $r \in (0, \min \{R_{0}, \operatorname{dist}(x, \partial \Omega) / 2\} )$ there exists a coordinate system $(y_1,y_2)$ depending on $x$ and $r$
such that 
\begin{equation*} 
	\frac{1}{|B_r(x)|}\int_{B_{r}( x)}\biggl|\mu( y_1,y_2)-\frac{1}{2r}\int_{y_2-r}^{y_2+r} \mu(y_{1}, s) \,d s\biggr| \,d y  \leq \gamma,
\end{equation*} 
then we have 
\begin{equation}\label{PP:2}
\nabla u\in L^p(\Omega; \R^4)\hbox{ and } \PP\div(\mu Su)\in L^p(\Omega; \R^2),\quad \forall p\in [2,\infty).
\end{equation}
\end{theorem}  
\begin{proof}
By Sobolev embedding and H\"older's inequality, we have for the  weak solutions 
$(\rho, u)\in L^\infty(\Omega)\times H^1(\Omega; \R^2)$ that
\begin{align*}
&u\in L^s(\Omega; \R^2)\hbox{ for any }s\in [1,\infty),\\
&\hbox{and hence }\rho u\cdot\nabla u\in L^p(\Omega;\R^2)\hbox{ for any }p\in [1,2).    
\end{align*}
By the $L^p$-estimate for the Leray-Helmholtz projector $\PP$ and the equality \eqref{PP}, we have \eqref{PP:12}.

Recall the fourth-order elliptic equation \eqref{elliptic} for the stream function  $ \Phi$ (we assume $f=0$)
\begin{equation}\label{elliptic:G}
L_\mu\Phi=\nabla^\perp\cdot\div(\rho u\otimes u),
\end{equation}
with $L_\mu =(\d_{x_2x_2}-\d_{x_1x_1}) \mu (\d_{x_2x_2}-\d_{x_1x_1}) +(2\d_{x_1x_2})\mu (2\d_{x_1x_2} )$.
By Sobolev embedding and H\"older's inequality again, for any solutions   $(\rho, u)\in L^\infty(\Omega)\times H^1(\Omega; \R^2)$ we have
$$
\rho u\otimes u\in L^p(\Omega; \R^4), \quad \forall p\in [2,\infty).
$$
For any boundary value  $u_0\in W^{1-\frac1p, p}(\d\Omega)$, $p\in [2,\infty)$, we may assume (with an abuse of notations) $\Phi_0\in W^{2,p}(\Omega)$ to be  the extension of the boundary value defined in \eqref{Phi0} with\footnote{The trace \& inverse trace Theorem \ref{trace} holds also for $u\in W^{1,p}$ and $\Phi\in W^{2,p}$ with $p\in(1,\infty)$, see for example \cite[Theorem 2.4.4]{Galdi}.} $$\|\Phi_0\|_{W^{2,p}}(\Omega)\lesssim \|u_0\|_{W^{1-\frac1p,p}(\d\Omega)}. $$ 
    
By the $L^p$-Estimate for the fourth-order elliptic equation with variably partially BMO coefficient in Theorem 8.6 in \cite{Dong}, we have
\begin{equation*}
\|\Phi\|_{W^{2,p}(\Omega)}\leq C(p,\mu_\ast, \mu^\ast, R_0, |\Omega|)\bigl( \|\rho u\otimes u\|_{L^p(\Omega)} +\|\Phi_0\|_{W^{2,p}(\Omega)}\bigr), 
\end{equation*}
for all $p\in [2,\infty)$.
Thus
$$
\Phi\in W^{2,p}(\Omega)\hbox{ and hence }u\in W^{1,p}(\Omega)\subset L^\infty(\Omega),\quad \forall p\in [2,\infty).
$$
Thus \eqref{PP:2} follows from the equation \eqref{PP}.
\end{proof}

 \begin{remark}[Symmetric flows in Theorem \ref{thm2} revisited]\label{rmk:PP}
Notice that in the parallel flow case  and in the concentric flow case, we have
 $$\PP\di(\mu Su)=\di(\mu Su),$$
 which is smooth by view of \eqref{ODE}, \eqref{nonlinearity:x2} and \eqref{nonlinearity}. 
  
 In the radial flow case $\rho=\rho(\theta)$, we assume  that 
 $$\di(\mu Su)=\nabla^\perp(\frac{\alpha(\theta)}{r^2})+\nabla(\frac{\beta(\theta)}{r^2}),$$
  where $\alpha=\alpha(\theta)$, $\beta=\beta(\theta)$ are scalar functions depending only on $\theta$. 
  Then by \eqref{nonlinearity:theta} and $\eqref{ODE}_3$: $\rho h^2 + (\mu h')'+4(\mu h)=C$, $\alpha$ and $\beta$ satisfy
\begin{equation*}
\left\{
\begin{aligned}
&-2\beta+\alpha'=(\mu h')'\\ &\beta'+2\alpha=2(\mu h)',
\end{aligned}
\right.
\end{equation*} 
that is,
\begin{equation*}
\left\{
\begin{aligned}
&\alpha''+4\alpha=4(\mu h)'+(\mu h')''=-(\rho h^2)',\\
&\beta'' +4\beta=-2(\mu h')'+2(\mu h)''=2(\rho h^2+4\mu h-C)+2(\mu h)''.
\end{aligned}
\right.
\end{equation*} 
We calculate straightforward (in the sense of distribution) that
\begin{align*}
\alpha
&=-\frac{\sin({2\theta})}{2}\int_0^\theta \cos({2s})(\rho h^2)'(s)\,ds
+\frac{\cos({2\theta})}{2}\int_0^{\theta} \sin(2s)(\rho h^2)'(s)\,ds
\\
&\quad +C_1\sin({2\theta})+C_2 \cos({2\theta}),
\\
\d_\theta\alpha
&=-\cos({2\theta})\int_0^\theta \cos({2s})(\rho h^2)'(s)\,ds
- \sin({2\theta})\int_0^{\theta} \sin(2s)(\rho h^2)'(s)\,ds
\\
&\quad+2C_1\cos({2\theta})-2C_2 \sin({2\theta}),
\end{align*}
for some real constants $C_1, C_1\in\R$.
It is then easy to see that if $\rho h^2\in L^p([0,2\pi))$ then $\alpha, \d_\theta\alpha\in L^p([0,2\pi))$ and hence
 $$\PP\di(\mu Su)=\nabla^\perp(\frac{\alpha}{r^2})=-\frac{2\alpha}{r^3}e_\theta+\frac{\d_\theta \alpha}{r^3} e_r\in L^p_\loc(\R^2\backslash\{0\}).$$
 However, if $\mu$ has a jump at the point $a$, then $\beta$ also has a jump at $a$, since (formally) $\beta'=2(\mu h)'-2\alpha=2\mu h'+2\mu' h-2\alpha$. Thus $\div(\mu Su)\not\in L^1_\loc(\R^2\backslash\{0\})$, which has been claimed in Corollary \ref{corollary}.

\end{remark}

\subsubsection{Further discussions on the  fourth-order elliptic equation}
We conclude this section with some further discussions on the fourth-order elliptic equation \eqref{elliptic}.
  If the right-hand of the equation \eqref{elliptic} simply vanishes, that is, 
\begin{equation}\label{beltrami2}
L_\mu\Phi=(\d_{x_2x_2}-\d_{x_1x_1}) \mu (\d_{x_2x_2}-\d_{x_1x_1})\Phi +(2\d_{x_1x_2})\mu (2\d_{x_1x_2} )\Phi=0,
\end{equation}
then with the function $\Psi:\R^2\to \R$ satisfying  
$$ \begin{pmatrix}
    \mu (\d_{22}-\d_{11})\Phi \\\mu 2\d_{12}\Phi
    \end{pmatrix}= \begin{pmatrix}
      -2\d_{12}\Psi\\ (\d_{22}-\d_{11})\Psi
    \end{pmatrix},$$
  the complex value function $\Lambda=\Phi+i\Psi$ solves the following second-order Beltrami-type equation
 \begin{align*}
   \d_{\bar z}^2\Lambda=\frac{1-\mu}{1+\mu}\overline{\d_z^2 \Lambda},
   \quad z=x_1+ix_2.
  \end{align*}
  
  This description can be compared with the first-order Beltrami equation
$$\d_{\bar z}\tilde w=\frac{1-\sigma}{1+\sigma}\overline{\d_z \tilde w}.$$ 
Here $\tilde w=\tilde u+i\tilde v$ is a complex value function, where the real part $\tilde u$ satisfies a second-order  elliptic equation of divergence form
\begin{equation}\label{2ndelliptic}
\di(\sigma(x)\nabla \tilde u)=0,
\end{equation}
and the imaginary part $\tilde v$ is related by $\sigma(x)\nabla \tilde u=\nabla^\perp \tilde v$.
According to \cite{AFS08}, on a bounded domain $\Omega\subset\R^2$, there exists a measurable function $\sigma: \Omega\mapsto \{\frac1K, K\}$, $K>1$ such that the solutions $\tilde u\in H^{1}(\Omega)$ to the equation \eqref{2ndelliptic} with the boundary condition $\tilde u|_{\partial\Omega}=x_1$ satisfies
$$
\int_{B} |\nabla \tilde u|^{\frac{2K}{K-1}}=\infty,
$$
for any disk $B\subset\Omega$. That is, $\tilde u\not\in W^{1,p}(\Omega)$ for any $p\geq \frac{2K}{K-1}$. 

Following the convex integration method in \cite{AFS08} (and after personal discussions with Laszlo Szekelyhidi), one can show that there exists a measurable function $\mu: \Omega\mapsto \{\frac1K, K\}$, $K>1$ such that the solutions $\Phi\in H^2(\Omega)$ of the equation \eqref{beltrami2}  satisfies 
$$
\int_{B} |\nabla^2 \Phi|^{\frac{2K}{K-1}}=\infty,
$$
for any disk $B\subset\Omega$.
Although it is not clear whether this constructed solution $(\rho, u)=(b^{-1}(\mu), \nabla^\perp\Phi)$ solves   the stationary Navier-Stokes equation \eqref{SNS}, we expect in general that the solutions for \eqref{SNS} with only bounded   viscosity coefficient $\mu$ (without any smoothness assumption) 
$$
\nabla u\not\in L^p(\Omega), \hbox{ for any }p\geq p_\ast,
$$
 where    $p_\ast<\infty$ depends on the deviation  $|\mu-1|$.

\section{Proof of Theorem \ref{thm}}\label{proof}
In this section we are going to prove Theorem \ref{thm}.

By virtue of Lemma \ref{fact} and the definitions in Subsection \ref{subs:result}, in order to prove (i) in Theorem \ref{thm}, it suffices to show the existence of the weak solutions  $\Phi\in H^2(\Omega)$ of the boundary value problem  \eqref{BVP:Phi}  
\begin{equation}
\label{ellipticbv}
\left\{
\begin{aligned}
&L_\mu\Phi=\nabla^\perp\cdot\div(\rho \nabla^\perp\Phi\otimes\nabla^\perp\Phi)-\nabla^\perp\cdot f,\\
& \rho=\eta(\Phi),\quad\mu=(b\circ \eta)(\Phi),\\
& \Phi|_{{\partial\Omega}} =\Phi_0,\quad 
 \frac{\d \Phi}{\d n}\big|_{\d\Omega} =\Phi_1,
\end{aligned}
\right.
\end{equation}
where  $L_\mu$ denotes the following fourth-order elliptic operator 
 $$L_\mu=(\d_{x_2x_2}-\d_{x_1x_1}) \mu (\d_{x_2x_2}-\d_{x_1x_1}) +(2\d_{x_1x_2})\mu (2\d_{x_1x_2} ).$$
Here the functions $\eta\in L^\infty(\R;[0,\rho^\ast])$, $0<\rho^\ast$, $b\in C(\R;[\mu_\ast, \mu^\ast])$, $0<\mu_\ast\leq \mu^\ast$ and the data $\Phi_0\in H^{\frac32}(\d\Omega)$, $\Phi_1\in H^{\frac12}(\d\Omega)$, $f\in H^{-1}(\Omega;\R^2)$ are given.
  

We will first show the solvability of \eqref{ellipticbv} on a bounded simply connected $C^{1,1}$ domain in Subsection \ref{subs:bdd}, 
and then the regularity results in Theorem \ref{thm}   in Subsection \ref{subs:k}.

 \subsection{Solvability on the bounded domain}\label{subs:bdd}
 Let $\Omega$ be a bounded simply connected $C^{1,1}$ domain on $\R^2$.
 
We first treat the boundary condition $\Phi|_{\partial\Omega}=\Phi_0\in H^{\frac32}(\partial\Omega)$ and $\frac{\d \Phi}{\d n}\big|_{\d\Omega} =\Phi_1\in H^{\frac12}(\partial\Omega)$.  By   the inverse trace Theorem~\ref{trace} and the Whitney's extension Theorem, 
we extend $\Phi_0$ on the whole plane $\R^2$  (still denoted by $\Phi_0$) such that $\frac{\d \Phi_0}{\d n}\big|_{\d\Omega} =\Phi_1$ and $\Phi_0\in H^2(\R^2)$.
We then take a sequence of truncated functions $\zeta(x;\delta)$ near the boundary $\partial\Omega$ and define
\begin{equation}\label{Phi0delta}
\Phi_0^\delta(x)=\Phi_0(x)\zeta(x;\delta)\in H^2(\R^2).
\end{equation}
Here  $\zeta(x;\delta) \in C_0^\infty(\R^2)$ is a smooth function, with $\zeta(x;\delta)=1$ near $\partial\Omega$ and $\zeta(x;\delta)=0$ if $\textrm{dist}(x,\partial\Omega)\geq \delta$, such that
    $$|\zeta(x;\delta)|\le C,\quad|\nabla \zeta(x;\delta)|\le C\delta^{-1},\quad\forall\delta\in(0,\delta_1],$$
   for some fixed constants $C>0$ and $\delta_1>0$. 
   Then
   $$
   \Phi_0^\delta|_{\partial\Omega}=\Phi_0|_{\partial\Omega},\quad \frac{\d\Phi^\delta_0}{\d n}|_{\partial\Omega}=\Phi_1.
   $$ 
    
    Fix $\delta>0$.
  If $\Phi\in H^2(\Omega)$ is a weak solution of the elliptic problem   \eqref{ellipticbv}, then 
     \begin{equation}\label{varphi}
     \varphi^\delta\defeq \Phi-\Phi_0^\delta\in H_0^2(\Omega)  
     \end{equation}
     satisfies  
\begin{equation}\label{ellipticvarphi}\begin{split}
& \int_{\Omega}\mu
\Bigl( (\d_{22}\varphi^\delta-\d_{11}\varphi^\delta) (\d_{22}\psi-\d_{11}\psi)
+ (2\d_{12}\varphi^\delta)(2\d_{12}\psi)\Bigr)\,dx
\\
&=\int_{\Omega}\rho(\nabla^\perp(\Phi_0^\delta+\varphi^\delta)
\otimes\nabla^\perp(\Phi_0^\delta+\varphi^\delta)):\nabla\nabla^\perp\psi\,dx
+\int_{\Omega}f\cdot \nabla^\perp\psi\,dx 
\\
&\quad 
- \int_{\Omega}\mu\Bigl( (\d_{22}\Phi_0^\delta-\d_{11}\Phi_0^\delta) (\d_{22}\psi-\d_{11}\psi)
+ (2\d_{12}\Phi_0^\delta)(2\d_{12}\psi)\Bigr)\,dx,
\\
& \quad\hbox{with }\rho=\eta(\Phi_0^\delta+\varphi^\delta)\hbox{ and } \mu=b(\rho),\quad \forall\psi\in H_0^2(\Omega),
\end{split}
\end{equation} 
and vice versa. 
We hence search for $\varphi^\delta\in H_0^2(\Omega)$ satisfying \eqref{ellipticvarphi}.

Fix $\tilde\varphi\in H_0^2(\Omega)$ and set 
\begin{equation}\label{tilderhomu}
\tilde\rho^\delta=\eta(\Phi_0^\delta+\tilde\varphi),\quad \tilde\mu^\delta=(b\circ \eta)(\Phi_0^\delta+\tilde\varphi).
\end{equation}
We take a sequence of mollifiers $(\sigma^\varepsilon)_\varepsilon$ on $\R^2$, with $\sigma^\varepsilon=\frac{1}{\varepsilon^2}\sigma(\frac\cdot\varepsilon)$, $\sigma\in C_0^\infty(\R^2)$, $\int_{\R^2}\sigma=1$. We define
\begin{align}\label{Phi0delta:neu}
\Phi_0^{\varepsilon}=\sigma^\varepsilon\ast\Phi_0
\hbox{ and }
\Phi_0^{\delta,\varepsilon}=\Phi_0^{\varepsilon}(x)\zeta(x;\delta)\in H^3(\R^2),
\end{align}
such that for any fixed $\delta<\delta_1$,
\begin{align}
\label{regularization:1}
&\Phi_0^{\delta,\varepsilon}\rightarrow \Phi_0^\delta \hbox{ in }H^2(\R^2),
\end{align}
where $\Phi_0^\delta$ is given in \eqref{Phi0delta}.
We take a sequence of mollifiers $(\phi^\varepsilon)_\varepsilon$ on $\R$, with $\phi^\varepsilon=\frac{1}{\varepsilon}\phi(\frac\cdot\varepsilon)$, $\phi\in C_0^\infty(\R)$, $\int_{\R}\phi=1$.
We regularize $ \tilde\rho^\delta,  \tilde\mu^\delta, f$ (we simply extend $f$ trivially on the whole plane)
\begin{align*}
&f^\varepsilon=\sigma^\varepsilon\ast f\in L^2(\Omega; \R^2),\\
&\eta^\varepsilon=\phi^\varepsilon\ast \eta\in C^2_b(\R;[0,\rho^*]),\quad b^\varepsilon=\phi^\varepsilon\ast b\in C^2_b(\R;[\mu_*,\mu^*]),
\\
& \tilde\rho^{\delta,\varepsilon}=\eta^\varepsilon(\Phi_0^{\delta,\varepsilon}+\tilde\varphi)\leq\rho^\ast ,
\quad \tilde\mu^{\delta,\varepsilon}=b^\varepsilon\bigl(\tilde\rho^{\delta,\varepsilon}\bigr)\in H^2(\Omega;[\mu_\ast,\mu^\ast]),
\end{align*}
such that  
\begin{align}\label{regularization}
&f^\varepsilon\rightarrow f\hbox{ in }H^{-1}(\Omega; \R^2),
\notag\\
&\tilde\rho^{\delta,\varepsilon}\mathop{\rightharpoonup}\limits^\ast \tilde\rho^\delta
\hbox{ and } \tilde\mu^{\delta,\varepsilon}\mathop{\rightharpoonup}\limits^\ast \tilde\mu^\delta
\hbox{ in }L^\infty(\Omega)\hbox{ as }\varepsilon\rightarrow0.
\end{align}

  In the following we are going to find $\varphi^\delta\in H_0^2(\Omega)$ satisfying  \eqref{ellipticvarphi} in three steps. 
In Step 1 we will search for  the unique   $\varphi\in H_0^2(\Omega)$ satisfying
\begin{equation}\label{A}\begin{split}
& \int_{\Omega}\tilde\mu^{\delta,\varepsilon}
\Bigl( (\d_{22}\varphi-\d_{11}\varphi) (\d_{22}\psi-\d_{11}\psi)
+ (2\d_{12}\varphi)(2\d_{12}\psi)\Bigr)\,dx
\\
&=\int_{\Omega}\tilde\rho^{\delta,\varepsilon}(\nabla^\perp(\Phi_0^{\delta,\varepsilon}+\tilde\varphi)
\otimes\nabla^\perp(\Phi_0^{\delta,\varepsilon}+\varphi)):\nabla\nabla^\perp\psi\,dx
+\int_{\Omega}f^\varepsilon\cdot \nabla^\perp\psi\,dx 
\\
& \quad
- \int_{\Omega}\tilde\mu^{\delta,\varepsilon}\Bigl( (\d_{22}\Phi_0^{\delta,\varepsilon}-\d_{11}\Phi_0^{\delta,\varepsilon}) (\d_{22}\psi-\d_{11}\psi)
+ (2\d_{12}\Phi_0^{\delta,\varepsilon})(2\d_{12}\psi)\Bigr)\,dx,
\\
&\,\,\forall\psi\in H_0^2(\Omega).
\end{split}
\end{equation} 
This unique solution will be denoted by $\varphi^{\delta, \varepsilon}$.

Similarly, let $\lambda\in [0,1]$ be a parameter. Then there exists a unique solution $\varphi^{\delta,\varepsilon}_\lambda\in H^2_0(\Omega)$ satisfying 
\begin{equation}\label{lam}\begin{split}
&\int_{\Omega}\tilde\mu^{\delta,\varepsilon}_{\lambda}
\Bigl( (\d_{22}\varphi^{\delta,\varepsilon}_{\lambda}-\d_{11}\varphi^{\delta,\varepsilon}_{\lambda}) (\d_{22}\psi-\d_{11}\psi)
+ (2\d_{12}\varphi^{\delta,\varepsilon}_{\lambda})(2\d_{12}\psi)\Bigr)\,dx
\\
&=\lambda\int_{\Omega}\tilde\rho^{\delta,\varepsilon}_{\lambda}(\nabla^\perp(\lambda\Phi_0^{\delta,\varepsilon}+\tilde\varphi)
\otimes\nabla^\perp(\lambda\Phi_0^{\delta,\varepsilon}+\varphi^{\delta,\varepsilon}_{\lambda})):\nabla\nabla^\perp\psi\,dx
+\lambda\int_{\Omega}f^\varepsilon\cdot \nabla^\perp\psi\,dx 
\\
& \quad
-\lambda\int_{\Omega}\tilde\mu^{\delta,\varepsilon}_{\lambda}\Bigl( (\d_{22}\Phi_0^{\delta,\varepsilon}-\d_{11}\Phi_0^{\delta,\varepsilon}) (\d_{22}\psi-\d_{11}\psi)
+ (2\d_{12}\Phi_0^{\delta,\varepsilon})(2\d_{12}\psi)\Bigr)\,dx,
\\
&\quad\hbox{ with }\tilde\rho^{\delta,\varepsilon}_{\lambda}=\eta^\varepsilon(\lambda\Phi_0^{\delta,\varepsilon}+\tilde\varphi)
\hbox{ and }\tilde\mu^{\delta,\varepsilon}=b^\varepsilon(\tilde\rho^{\delta,\varepsilon}_{\lambda}),
\quad \forall\psi\in H_0^2(\Omega).
\end{split}
\end{equation}
 Notice that  \eqref{A} is \eqref{lam} with $\lambda=1$.

 In Step 2 we will define the map
\begin{equation*}
T^{\delta,\varepsilon}:[0,1]\times H^2_0(\Omega)\ni (\lambda,\tilde\varphi)\mapsto\varphi^{\delta,\varepsilon}_\lambda\in H^2_0(\Omega).
\end{equation*}
Notice that, if $\varphi^{\delta,\varepsilon}_\lambda$ satisfies $\varphi^{\delta,\varepsilon}_\lambda=T^{\delta,\varepsilon}(\lambda,\varphi^{\delta,\varepsilon}_\lambda)$, then \eqref{lam} can be seen as an equality for $\Phi^\varepsilon_\lambda:=\lambda \Phi_0^{\delta,\varepsilon}+\varphi^{\delta,\varepsilon}_\lambda$,
\begin{equation}\label{lam,delta} \begin{split}
& \int_{\Omega}\mu^\varepsilon_\lambda
\Bigl( (\d_{x_2x_2}\Phi^\varepsilon_\lambda-\d_{x_1x_1}\Phi^\varepsilon_\lambda) (\d_{x_2x_2}\psi-\d_{x_1x_1}\psi)
+ (2\d_{x_1x_2}\Phi^\varepsilon_\lambda)(2\d_{x_1x_2}\psi)\Bigr)\,dx
\\
&=\lambda\int_{\Omega}\rho^\varepsilon_\lambda(\nabla^\perp\Phi^\varepsilon_\lambda\otimes\nabla^\perp\Phi^\varepsilon_\lambda):\nabla\nabla^\perp\psi\,dx
+\lambda\int_{\Omega}f\cdot \nabla^\perp\psi\,dx,\quad\forall\psi\in H_0^2(\Omega),
\end{split}
\end{equation} 
where $\rho^\varepsilon_\lambda=\eta^\varepsilon(\Phi^\varepsilon_\lambda)$, $\mu^\varepsilon_\lambda=b^\varepsilon(\rho^\varepsilon_\lambda)$.
We observe that $\Phi^\varepsilon_\lambda$ is independent of $\delta$. 
This fact is going to be used to show a uniform bound on the sequence $(\varphi_\lambda^{\delta,\varepsilon})$.

We are going to show that the map $T^{\delta,\varepsilon}$ has a fixed point with $\lambda=1$
(denoted by $\varphi^{\delta,\varepsilon}$) satisfying \eqref{A} with $\tilde\varphi=\varphi^{\delta,\varepsilon}$:
\begin{equation}\label{T}\begin{split}
&\int_{\Omega} \mu^{\delta,\varepsilon}
\Bigl( (\d_{22}\varphi^{\delta,\varepsilon}-\d_{11}\varphi^{\delta,\varepsilon}) (\d_{22}\psi-\d_{11}\psi)
+ (2\d_{12}\varphi^{\delta,\varepsilon})(2\d_{12}\psi)\Bigr)\,dx
\\
&=\int_{\Omega} \rho^{\delta,\varepsilon}(\nabla^\perp(\Phi_0^{\delta,\varepsilon}+\varphi^{\delta,\varepsilon})
\otimes\nabla^\perp(\Phi_0^{\delta,\varepsilon}+\varphi^{\delta,\varepsilon})):\nabla\nabla^\perp\psi\,dx
+\int_{\Omega}f^\varepsilon\cdot \nabla^\perp\psi\,dx 
\\
& \quad
-\int_{\Omega} \mu^{\delta,\varepsilon}\Bigl( (\d_{22}\Phi_0^{\delta,\varepsilon}-\d_{11}\Phi_0^{\delta,\varepsilon}) (\d_{22}\psi-\d_{11}\psi)
+ (2\d_{12}\Phi_0^{\delta,\varepsilon})(2\d_{12}\psi)\Bigr)\,dx,
\\
&\quad\hbox{ with } \rho^{\delta,\varepsilon} =\eta^\varepsilon( \Phi_0^{\delta,\varepsilon}+\varphi^{\delta,\varepsilon})
\hbox{ and } \mu^{\delta,\varepsilon}=b^\varepsilon( \rho^{\delta,\varepsilon}_{\lambda}),\quad\forall\psi\in H_0^2(\Omega).
\end{split}
\end{equation} 
To show the existence of the fixed point, we will apply the following Leray-Schauder's fixed point theorem, after checking   the conditions (LS1), (LS2) and (LS3) one by one.
\begin{theorem}[Leray-Schauder's fixed point theorem, \cite{Maugeri}]\label{thm:LS}
 Let $B$ be a Banach space. If
\begin{enumerate}[(LS1)]
    \item $T(0,u)=0$, for all $u \in B$,\label{condition 1}
      \item $T$ is a compact map from $B \times[0,1]$ to
$B$, \label{condition 2}
    \item The solutions of $u=T(\lambda, u)$ for all $\lambda\in[0,1]$ are uniformly bounded in $B$.\label{condition 3}
\end{enumerate}
Then there exists $u\in B$ such that $u=T(1,u)$. 
\end{theorem}


In Step 3 we will take $\varepsilon\rightarrow 0$ in the sequence $\{\varphi^{\delta,\varepsilon}\}$ such that the limit $\varphi^\delta$ satisfies \eqref{ellipticvarphi}, and hence $\Phi=\Phi_0^\delta+\varphi^\delta$ solves  the boundary value problem  \eqref{ellipticbv} on the bounded domain $\Omega$.

\smallbreak 

\noindent\textbf{Step 1 Unique solvability  of \eqref{A}.}\\ 
Let $\tilde\varphi\in H_0^2(\Omega)$ be given.  We are going to search for $\varphi\in H_0^2(\Omega)$ satisfying \eqref{A} under the following assumptions on the given functions:
   \begin{equation}\label{A:hypothesis}
    \tilde\rho^{\delta,\varepsilon}=\eta^\varepsilon(\Phi_0^{\delta,\varepsilon}+\tilde\varphi)\leq\rho^\ast,\quad\tilde\mu^{\delta,\varepsilon}\in[\mu_\ast,\mu^\ast],
    \quad \Phi_0^{\delta,\varepsilon}\in H^2(\R^2), 
    \quad f^\varepsilon\in H^{-1}(\Omega; \R^2).
    \end{equation}
    For notational simplicity we do not indicate the upper indices $\delta,\varepsilon$ explicitly  in this step.

We  define the inner product $\langle\cdot,\cdot\rangle$ on the Hilbert space $H_0^2(\Omega; \R)$ as follows:
\begin{equation*}
\label{product}
\langle \varphi, \psi\rangle\defeq  \int_{\Omega}\tilde\mu 
\Bigl( (\d_{22}\varphi-\d_{11}\varphi) (\d_{22}\psi-\d_{11}\psi)
+ (2\d_{12}\varphi)(2\d_{12}\psi)\Bigr)\,dx.
\end{equation*}
Then the corresponding norm $\langle\cdot,\cdot\rangle^{\frac12}$ is equivalent to the $H^2$-norm on $H^2_0(\Omega)$.
Indeed,   
\begin{align*}
&\mu_\ast a       \leq  \langle\varphi,\varphi\rangle 
     =\int_{\Omega}\tilde\mu  \Big(
    (\d_{22}\varphi-\d_{11}\varphi)^2
    +(2\d_{12}\varphi)^2\Big)\,dx
  \leq \mu^\ast a,
\end{align*}
where
$$
a\defeq \int_{\Omega} \Big(
    (\d_{22}\varphi-\d_{11}\varphi)^2
    +(2\d_{12}\varphi)^2\Big)\,dx\geq 0.
$$
By integration by parts and approximation arguments, for $\varphi\in H_0^2(\Omega)$ there holds
\begin{align*}
a& =\int_{\Omega} \Bigl(  (\d_{11}\varphi)^2+(\d_{22}\varphi)^2-2\d_{11}\varphi\d_{22}\varphi+(2\d_{12}\varphi)^2 \Bigr)\,dx
\\
&=\int_{\Omega} \Bigl(  (\d_{11}\varphi)^2+(\d_{22}\varphi)^2+2\d_{11}\varphi\d_{22}\varphi \Bigr)\,dx
=\int_\Omega(\d_{11}\varphi+\d_{22}\varphi)^2\,dx
=\|\Delta\varphi\|_{L^2(\Omega)}^2.
\end{align*}
Thus
\begin{align}\label{equivalence}
\sqrt{\mu_\ast}\|\Delta\varphi\|_{L^2(\Omega)}\leq \langle\varphi,\varphi\rangle^{\frac12}\leq\sqrt{\mu^\ast}\|\Delta\varphi\|_{L^2(\Omega)},
\end{align}
and hence by virtue of the equivalence of the norms $\|\Delta\cdot\|_{L^2(\Omega)}\sim \|\cdot\|_{H^2(\Omega)}$ on $H_0^2(\Omega)$, we have the equivalence of the norms 
$$ \langle\cdot,\cdot\rangle^\frac12 \sim\|\cdot\|_{H^2(\Omega)},
\quad\hbox{ on }H_0^2(\Omega).$$

    Notice that the left-hand side of \eqref{A} reads as $\langle \varphi, \psi\rangle$. We are going to show that the right-hand side of \eqref{A} is a linear functional on $H_0^2(\Omega)$
    $$
    l(\psi),
    $$
    which by Lax-Milgram theorem  defines a unique element (denoted by $A\varphi$) in $H_0^2(\Omega)$ such that 
    $$
    l(\psi)=\langle A\varphi, \psi\rangle.
    $$
    Then we will verify the conditions (LS1), (LS2) and (LS3) in Leray-Schauder's fixed point Theorem \ref{thm:LS}  for  the map 
    $$\alpha A: [0,1]\times H_0^2(\Omega)\mapsto H_0^2(\Omega),$$
    to show  the existence of the unique solution for  the equation
    $$
    \varphi=A\varphi
    $$
    and hence \eqref{A}.
    
\noindent\textbf{Definition of the operator $A$.} By virtue of \eqref{A:hypothesis},    the right-hand side of \eqref{A} depends linearly on $\psi$ and can be bounded by
    \begin{align*}
    &\Bigl(\rho^\ast\|\Phi_0+\tilde\varphi\|_{W^{1,4}}\|\Phi_0+\varphi\|_{W^{1,4}}+\|f\|_{H^{-1}}+8\mu^\ast\|\Phi_0\|_{H^2}\Bigr)\|\psi\|_{H^2}
    \\
    &\leq C(\rho^\ast+\mu^\ast+1)(\|\Phi_0\|_{H^2}+\|\tilde \varphi\|_{H^2}+\|f \|_{H^{-1}}) \bigl(1+\|\Phi_0\|_{H^2}+\|\varphi\|_{H^2}\bigr)\|\psi\|_{H^2},
    \end{align*}
    for some constant $C>0$.
    Here we used the Sobolev's inequality
    $$
    \|g\|_{L^4(\Omega)}\leq C\|g\|_{H^1(\Omega)},\quad\forall g\in H^1(\Omega).
    $$
    Hence the right-hand side of \eqref{A} defines a linear functional $l(\psi)$ on $H_0^2(\Omega)$, which defines correspondingly by Lax-Milgram theorem an element (denoted by $A\varphi$) such that $l(\psi)=\langle A\varphi,\psi\rangle$.
    \smallbreak
      \noindent\textbf{Verification  of Condition (LS1).}   If $\alpha=0$, then the map $\alpha A=0$ trivially.
    \smallbreak
         \noindent\textbf{Verification  of Condition (LS2).}
     In order to show the compactness of the  operator $\alpha A$, we take a weak convergent sequence $(\alpha_n,\varphi_n)\subset [0,1]\times H_0^2(\Omega)$. 
   By virtue of the compact embedding $H_0^2(\Omega)\hookrightarrow W^{1,4}(\Omega)$, there exists a  subsequence (still denoted by $(\alpha_n,\varphi_n)$) converging strongly in $[0,1]\times W^{1,4}(\Omega)$, and hence 
   \begin{align*}
   \|\alpha_n A\varphi_n- \alpha_m A\varphi_m\|_{H^2}
   &\leq \sup_{\|\psi\|_{H^2}=1}\big(|\alpha_n\langle A\varphi_n-A\varphi_m, \psi\rangle|+|(\alpha_n-\alpha_m)\langle A\varphi_m,\psi\rangle|\big)
   \\
   &\leq\sup_{\|\psi\|_{H^2}=1}\Bigl|\int_{\Omega}\tilde\rho\bigl( \nabla^\perp(\Phi_0+\tilde\varphi)\otimes\nabla^\perp (\varphi_n-\varphi_m)\bigr):\nabla\nabla^\perp\psi\,dx\Bigr|\\
   &\quad +|\alpha_n-\alpha_m|\|A\varphi_m\|_{H^2}
   \\
   &\leq C(\|\varphi_n-\varphi_m\|_{W^{1,4}}+|\alpha_m-\alpha_n|)\\
   &\quad\times\bigl(\rho^*\|\Phi_0+\tilde\varphi\|_{W^{1,4}}(1+\|\varphi_m\|_{W^{1,4}})+\|f\|_{H^{-1}}+\mu^*\|\Phi_0\|_{H^2}\bigr)\\
   & 
   \rightarrow0\hbox{ as }n,m\rightarrow\infty.
   \end{align*}
 \smallbreak
 \noindent\textbf{Verification of Condition (LS3).}  
The solutions of $\varphi=\alpha A\varphi$ are uniformly bounded in $H_0^2(\Omega)$. Indeed, if $\varphi=\alpha A\varphi\in H_0^2(\Omega)$, then $\langle \varphi, \psi\rangle=\alpha\langle A\varphi, \psi\rangle=\alpha l(\psi)$ for any $\psi\in H_0^2(\Omega)$, and in particular when $\psi=\varphi$,
\begin{align*}
\langle\varphi,\varphi\rangle
&=\alpha\int_{\Omega}\tilde\rho(\nabla^\perp(\Phi_0+\tilde\varphi)
\otimes\nabla^\perp(\Phi_0+\varphi)):\nabla\nabla^\perp\varphi\,dx
+\alpha\int_{\Omega}f \cdot \nabla^\perp\varphi\,dx 
\\
& \quad
-\alpha\int_{\Omega}\tilde\mu\Bigl( (\d_{22}\Phi_0-\d_{11}\Phi_0) (\d_{22}\varphi-\d_{11}\varphi)
+ (2\d_{12}\Phi_0)(2\d_{12}\varphi)\Bigr)\,dx.
\end{align*}
Notice the equality
\begin{align}\label{convection}
\int_{\Omega}\tilde\rho( \nabla^\perp(\Phi_0+\tilde\varphi)
\otimes\nabla^\perp \varphi):\nabla\nabla^\perp\varphi\,dx
&=\int_{\Omega}\tilde \rho\nabla^\perp(\Phi_0+\tilde\varphi)\cdot\nabla\nabla^\perp\varphi\cdot\nabla^\perp\varphi\,dx
\notag\\
&=-\frac12\int_{\Omega}\div\bigl( \tilde \rho\nabla^\perp(\Phi_0+\tilde\varphi)\bigr)
|\nabla^\perp\varphi|^2\,dx=0,
\end{align}
where we used $\tilde\rho=\eta(\Phi_0+\tilde\varphi)$ in the last equality.
We hence derive from $\langle\varphi, \varphi\rangle=\alpha l(\varphi)$ above  and $\|g\|_{L^4(\Omega)}\leq C\|g\|_{H^1(\Omega)}$ that
\begin{equation}\label{bd}
    \langle\varphi,\varphi\rangle\leq C\alpha  (\rho^\ast+1+\mu^\ast)(\|\Phi_0\|_{H^2}+\|\tilde \varphi\|_{H^2}+\|f\|_{H^{-1}})
    \bigl(1+\|\Phi_0\|_{H^2}\bigr) \|\varphi\|_{H^2}.
\end{equation}
Since  the norm $\langle\cdot,\cdot\rangle^{\frac12}\geq \sqrt{\mu_\ast}\|\Delta\cdot\|_{L^2(\Omega)}$ is equivalent to $\|\cdot\|_{H^2(\Omega)}$ on $H_0^2(\Omega)$, there is a uniform bound for all $\varphi\in H_0^2(\Omega)$ such that $\varphi=\alpha A\varphi$, $\alpha\in [0,1]$:
\begin{equation}\label{bound:varphi}
\|\varphi\|_{H^2}\leq C\mu_\ast^{-1}(\rho^\ast+1+\mu^\ast)(\|\Phi_0\|_{H^2}+\|\tilde \varphi\|_{H^2}+\|f\|_{H^{-1}})
    \bigl(1+\|\Phi_0\|_{H^2}\bigr).
\end{equation}

By Leray-Schauder's Theorem \ref{thm:LS}, there exists a solution of $\varphi=A\varphi$ in $H_0^2(\Omega)$. This solution solves \eqref{A}: $\langle\varphi,\psi\rangle=\langle A\varphi, \psi\rangle=l(\psi)$ for all $\psi\in H_0^2(\Omega)$.
This solution is unique.
Indeed, if there exist two solutions $\varphi_1, \varphi_2\in H_0^2(\Omega)$ of \eqref{A}, then their difference $\dot\varphi=\varphi_1-\varphi_2\in H_0^2(\Omega)$ satisfies
\begin{align*}
\langle\dot\varphi,\psi\rangle=\int_{\Omega}\tilde\rho \nabla^\perp(\Phi_0+ \tilde \varphi)\cdot\nabla\nabla^\perp\psi\cdot\nabla\dot\varphi,\quad\forall\psi\in H_0^2(\Omega).
\end{align*} 
Take $\psi=\dot\varphi$, then by the calculation in \eqref{convection} the right-hand side above vanishes and hence $\dot\varphi=0$, i.e., $\varphi_1=\varphi_2$.

  \medskip
  
  \noindent\textbf{Step 2 Solvability of \eqref{T}.}\\ 
  By the procedure in Step 1 above, we can solve \eqref{lam} uniquely for any $\lambda\in [0,1]$, and we denote this unique solution satisfying \eqref{lam} by $\varphi_\lambda^{\delta,\varepsilon}$.

We are going to check the conditions (LS1), (LS2) and (LS3) for the map $T^{\delta,\varepsilon}:(\lambda,\tilde\varphi)\mapsto \varphi_\lambda^{\delta,\varepsilon}$, in order to show   the existence of the fixed point of $T^{\delta,\varepsilon} $ with $\lambda=1$ by the Leray-Schauder fixed point Theorem \ref{thm:LS}. 
  \smallbreak 
  \noindent\textbf{Verification of Condition (LS1). }Let $\lambda=0$ in \eqref{lam} and let $\varphi^{\delta,\varepsilon}_0$ satisfy $\varphi^{\delta,\varepsilon}_0=T^{\delta,\varepsilon}(0,\tilde\varphi)$. We take $\psi=\varphi^{\delta,\varepsilon}_0$ in \eqref{lam}, which implies
$$\|\Delta \varphi^{\delta,\varepsilon}_0 \|_{L^2}=0.$$
Since $\varphi_0^{\delta,\varepsilon}\in H^2_0(\Omega)$, $\varphi_0^{\delta,\varepsilon}=0$.

\smallbreak
\noindent\textbf{Verification of Condition (LS2).} 
The map
  $$
T^{\delta,\varepsilon}: [0,1]\times H_0^2(\Omega)\ni (\lambda,\tilde\varphi)\mapsto\varphi^{\delta,\varepsilon}_\lambda\in H_0^2(\Omega)
  $$
  is compact, where $\varphi^{\delta,\varepsilon}_\lambda$ is the solution of \eqref{lam}, under the following assumptions on the regularized data:
\begin{equation*}
\Phi_0^{\delta,\varepsilon}\in H^3(\R^2),
\quad \tilde\rho_\lambda^{\delta,\varepsilon}=\eta^\varepsilon(\lambda\Phi_0^{\delta,\varepsilon}+\tilde\varphi)\leq\rho^\ast,
\quad \tilde\mu_\lambda^{\delta,\varepsilon}\in H^2(\Omega),
\quad f^\varepsilon\in L^2(\Omega; \R^2).
\end{equation*}

Indeed, let $(\lambda_n,\tilde\varphi_n)$ be a bounded sequence in $[0,1]\times H^2_0(\Omega)$. Then there exists a   subsequence (still denote by $(\lambda_n,\tilde\varphi_n)$), such that
$$|\lambda_m-\lambda_n|\to 0,\quad \|\tilde\varphi_m-\tilde\varphi_n \|_{W^{1,4}}\to0,\hbox{ as }m,n\to\infty.$$
We denote  $\varphi^{\delta,\varepsilon}_{n}=T^{\delta,\varepsilon}(\lambda_n,\tilde\varphi_n)$, $\tilde\rho^{\delta,\varepsilon}_n=\eta^\varepsilon(\lambda_n\Phi_0^{\delta,\varepsilon}+\tilde\varphi_n)$ and $\tilde\mu^{\delta,\varepsilon}_n= b^\varepsilon(\tilde\rho^{\delta,\varepsilon}_n)$.
We  take the difference between $\eqref{lam}$ with $(\lambda_m,\tilde\varphi_m)$ and $\eqref{lam}$ with $(\lambda_n,\tilde\varphi_n)$. 
Let $\psi=\dot{\varphi}^{\delta,\varepsilon}=\varphi^{\delta,\varepsilon}_m-\varphi^{\delta,\varepsilon}_n$, then (noticing \eqref{convection} again and $\|\cdot\|_{L^\infty(\Omega)}\lesssim \|\cdot\|_{W^{1,4}(\Omega)}$)
{\small\begin{align}\label{differencevarphi}
&\|\Delta(\varphi^{\delta,\varepsilon}_m-\varphi^{\delta,\varepsilon}_n)\|_{L^2}^2\\
&\leq C (\mu_\ast)
\Bigl(  \Bigl|\int_\Omega(\tilde\mu^{\delta,\varepsilon}_m-\tilde\mu^{\delta,\varepsilon}_n)
\Bigl( (\d_{22}\varphi^{\delta,\varepsilon}_m-\d_{11}\varphi^{\delta,\varepsilon}_m) (\d_{22}\dot\varphi^{\delta,\varepsilon}-\d_{11}\dot\varphi^{\delta,\varepsilon})
+ (2\d_{12}\varphi^{\delta,\varepsilon}_m)(2\d_{12}\dot\varphi^{\delta,\varepsilon})\Bigr)\,dx\Bigr|
\notag\\
&\quad +|\lambda_m-\lambda_n|\Bigl|\int_\Omega\tilde\rho^{\delta,\varepsilon}_m(\nabla^\perp(\lambda_m\Phi_0^{\delta,\varepsilon}+\tilde\varphi_m)\otimes\nabla^\perp(\lambda_m\Phi_0^{\delta,\varepsilon}+\varphi^{\delta,\varepsilon}_m)):\nabla\nabla^\perp\dot\varphi^{\delta,\varepsilon} \,dx\Bigr|\notag\\
&\quad +\Bigl| \int_{\Omega}\lambda_n(\tilde\rho^{\delta,\varepsilon}_m-\tilde\rho^{\delta,\varepsilon}_n)(\nabla^\perp(\lambda_m\Phi_0^{\delta,\varepsilon}+\tilde\varphi_m)
\otimes\nabla^\perp(\lambda_m\Phi_0^{\delta,\varepsilon}+\varphi^{\delta,\varepsilon}_m)):\nabla\nabla^\perp\dot\varphi^{\delta,\varepsilon} \,dx\Bigr|\notag\\
&\quad +\Bigl| \int_{\Omega}\lambda_n\tilde\rho^{\delta,\varepsilon}_{n}(\nabla^\perp((\lambda_m-\lambda_n)\Phi_0^{\delta,\varepsilon})
\otimes\nabla^\perp(\lambda_m\Phi_0^{\delta,\varepsilon}+\varphi^{\delta,\varepsilon}_{m})):\nabla\nabla^\perp\dot\varphi^{\delta,\varepsilon} \,dx\Bigr|\notag\\
&\quad +\Bigl| \int_{\Omega}\lambda_n\tilde\rho^{\delta,\varepsilon}_n(\nabla^\perp(\tilde\varphi_m-\tilde\varphi_n)
\otimes\nabla^\perp(\lambda_m\Phi_0^{\delta,\varepsilon}+\varphi^{\delta,\varepsilon}_m)):\nabla\nabla^\perp\dot\varphi^{\delta,\varepsilon} \,dx\Bigr|\notag\\
&\quad +\Bigl| \int_{\Omega}\lambda_n\tilde\rho^{\delta,\varepsilon}_{n}(\nabla^\perp(\lambda_n\Phi_0^{\delta,\varepsilon}+\tilde\varphi_n)
\otimes\nabla^\perp((\lambda_m-\lambda_n)\Phi_0^{\delta,\varepsilon})):\nabla\nabla^\perp\dot\varphi^{\delta,\varepsilon} \,dx\Bigr|\notag\\
&\quad+\Bigl| \int_{\Omega}(\tilde\mu^{\delta,\varepsilon}_m-\tilde\mu^{\delta,\varepsilon}_n)\Bigl( (\d_{22}\Phi_0^{\delta,\varepsilon}-\d_{11}\Phi_0^{\delta,\varepsilon}) (\d_{22}\dot\varphi^{\delta,\varepsilon}-\d_{11}\dot\varphi^{\delta,\varepsilon})
+ (2\d_{12}\Phi_0^{\delta,\varepsilon})(2\d_{12}\dot\varphi^{\delta,\varepsilon})\Bigr)\,dx\Bigr| \Bigr)\notag\\
&\le C(\rho^\ast, \mu_\ast, \mu^\ast)(|\lambda_m-\lambda_n|+\|\tilde\varphi_m-\tilde\varphi_n\|_{W^{1,4}})(1+\|b^\varepsilon(\eta^\varepsilon)\|_{W^{1,\infty}}+\|\eta^\varepsilon\|_{W^{1,\infty}})\notag\\
&\quad\times(1+\|\Delta \varphi^{\delta,\varepsilon}_m\|_{L^2}+\|\Delta \Phi_0^{\delta,\varepsilon}\|_{L^2}) (1+\|\Phi^{\delta,\varepsilon}_0\|_{W^{1,4}}+\|\tilde\varphi_m\|_{W^{1,4}}+\|\tilde\varphi_n\|_{W^{1,4}})\notag\\
&\quad\times(1+\|\Phi^{\delta,\varepsilon}_0\|_{W^{1,4}}+\|\varphi^{\delta,\varepsilon}_m\|_{W^{1,4}})\|\Delta \dot\varphi^{\delta,\varepsilon}\|_{L^2},\notag
\end{align}} 
 Notice that, since $\{\tilde \varphi_n\}$ is uniformly bounded in $H^2$,
the uniform bound of $\{\|\Delta \varphi_n^{\delta,\varepsilon}\|_{L^2}\}$ can be derived similarly to \eqref{bound:varphi}. 
Hence, the following strong convergence holds 
$$\|\Delta(\varphi^{\delta,\varepsilon}_m-\varphi^{\delta,\varepsilon}_n)\|_{L^2}\le C( \|\tilde\varphi_m-\tilde\varphi_n\|_{W^{1,4}}+|\lambda_m-\lambda_n|)\to 0  \hbox{ as } m,\,n\to\infty.$$
The map  $T^{\delta,\varepsilon}: (\lambda,\tilde\varphi)\mapsto\varphi^{\delta,\varepsilon}_\lambda$ is compact.

\smallbreak
\noindent\textbf{Verification of Condition (LS3).}
  Let $\varphi^{\delta,\varepsilon}_{\lambda}$ denote the fixed point of $\varphi= T^{\delta,\varepsilon}(\lambda, \varphi)$ satisfying \eqref{lam}. 
We are going to derive a uniform bound on $\|\varphi^{\delta,\varepsilon}_{\lambda}\|_{H^2}$ by a contradiction argument.
Suppose by contradiction that there exists a subsequence $(\varphi^{\delta,\varepsilon}_{\lambda_n})\subset(\varphi^{\delta,\varepsilon}_{\lambda})$ such that
$$
\|\varphi^{\delta,\varepsilon}_{\lambda_n}\|_{H^2}\rightarrow\infty.
$$
Then we drive from \eqref{lam} with $\psi=\varphi^{\delta,\varepsilon}_{\lambda_n}$ that (noticing again the equality \eqref{convection})
\begin{align}\label{Estimate:varphi,eps}
\mu_\ast\|\Delta \varphi^{\delta,\varepsilon}_{\lambda_n}\|_{L^2}^2
&\leq C(\rho^\ast,\mu_\ast,\mu^\ast)
\Bigl(\bigl( \|\Phi_0^{\delta,\varepsilon}\|_{H^2}^2+\|f^\varepsilon\|_{H^{-1}}+\|\Phi_0^{\delta,\varepsilon}\|_{H^2}\bigr)\|\varphi^{\delta,\varepsilon}_{\lambda_n}\|_{H^2}\notag
\\
&\qquad+\int_{\Omega} \rho^{\delta,\varepsilon}_{\lambda_n}\nabla^\perp \varphi^{\delta,\varepsilon}\cdot\nabla\nabla^\perp\varphi^{\delta,\varepsilon}\cdot
 \nabla^\perp\Phi_0^{\delta,\varepsilon} \,dx\Bigr),
\end{align}
 Let us denote $g^{\delta,\varepsilon}_{\lambda_n}=\frac{\varphi^{\delta,\varepsilon}_{\lambda_n}}{\|\varphi^{\delta,\varepsilon}_{\lambda_n}\|_{H^2}}$, then we drive from the above inequality that 
 \begin{align*}
1\leq \frac{C(\rho^\ast,\mu_\ast,\mu^\ast, \|\Phi_0\|_{H^2},\|f\|_{H^{-1}})}{\|\varphi^{\delta,\varepsilon}_{\lambda_n}\|_{H^2}} 
+C\int_{\Omega}
\Bigl| \nabla^\perp g^{\delta,\varepsilon}_{\lambda_n}\cdot\nabla\nabla^\perp g^{\delta,\varepsilon}_{\lambda_n}\cdot
 \nabla^\perp \Phi_0^{\delta,\varepsilon}\Bigr|\,dx.
\end{align*}
Since $\|g^{\delta,\varepsilon}_{\lambda_n}\|_{H^2}=1$, there exist  subsequences (still denoted by $(g^{\delta,\varepsilon}_{\lambda_n}))$ such that
\begin{align*}
g^{\delta,\varepsilon}_{\lambda_n}\rightharpoonup g^{\varepsilon}  \hbox{ in }H_0^2(\Omega),
\quad g^{\delta,\varepsilon}_{\lambda_n}\rightarrow g^{\varepsilon}  \hbox{ in }W^{1,4}(\Omega).
\end{align*} 
Here the limit $g^{\varepsilon}$ does not depend on $\delta$. Indeed, notice that the $\delta$-independent function $\Phi^\varepsilon_\lambda=\lambda \Phi_0^{\delta,\varepsilon}+\varphi^{\delta,\varepsilon}_\lambda= \lambda \Phi_0^{\delta',\varepsilon}+\varphi^{\delta',\varepsilon}_\lambda$ satisfies \eqref{lam,delta}.
Then 
 $\|\varphi^{\delta',\varepsilon}_{\lambda_n}\|_{H^2}\rightarrow \infty,$
and
\begin{align*}
&\lim_{n\rightarrow\infty}\frac{\varphi^{\delta',\varepsilon}_{\lambda_n}}{\|\varphi^{\delta',\varepsilon}_{\lambda_n}\|_{H^2}}
=\lim_{n\rightarrow\infty}\frac{\varphi^{\delta,\varepsilon}_{\lambda_n}+\lambda\Phi_0^{\delta,\varepsilon}-\lambda\Phi_0^{\delta',\varepsilon}}{\|\varphi^{\delta',\varepsilon}_{\lambda_n}\|_{H^2}}\\
&=\lim_{n\rightarrow\infty}\frac{\varphi^{\delta,\varepsilon}_{\lambda_n}}{\|\varphi^{\delta',\varepsilon}_{\lambda_n}\|_{H^2}}
=\lim_{n\rightarrow\infty}\frac{\varphi^{\delta,\varepsilon}_{\lambda_n}}{\|\varphi^{\delta,\varepsilon}_{\lambda_n}\|_{H^2}}
=g^{\varepsilon}.
\end{align*}

Then taking $n\rightarrow\infty$ in the above inequality we arrive at
\begin{align*}
1\leq  C\int_{\Omega}
\Bigl| \nabla^\perp g^{\varepsilon}\cdot\nabla\nabla^\perp g^{\varepsilon}\cdot
 \nabla^\perp \Phi_0^{\delta,\varepsilon}\Bigr|\,dx.
\end{align*} 
Recall the definition of $\Phi_0^{\delta,\varepsilon}$ in \eqref{Phi0delta:neu} and the regularisation \eqref{regularization:1}, such that
\begin{equation}\label{estimate1}
|\nabla^\perp \Phi_0^{\delta, \varepsilon}|
\leq |\nabla^\perp(\Phi_0^\varepsilon(x)\zeta(x;\delta))|
\leq C(\delta^{-1}|\Phi_0^\varepsilon|+|\nabla\Phi_0^\varepsilon|).
\end{equation}
Hence with $\Omega^\delta$ denoting the boundary strip of width $\delta$, we derive from the above inequality that
\begin{equation}\label{estimate2}
\begin{aligned}
1
&\leq  C\int_{\Omega^\delta}
\bigl| \nabla^\perp g^\varepsilon\cdot\nabla\nabla^\perp g^\varepsilon\bigr| 
(\delta^{-1}|\Phi_0^\varepsilon|+|\nabla\Phi_0^\varepsilon|) \,dx
\\
&\leq C\delta^{-1}\|\nabla g^\varepsilon\|_{L^2(\Omega^\delta)}\|\nabla^2 g^\varepsilon\|_{L^2(\Omega^\delta)}\|\Phi_0^\varepsilon\|_{L^\infty}
\\
&\quad
+C\|\nabla g^\varepsilon\|_{L^4(\Omega^\delta)}\|\nabla^2 g^\varepsilon\|_{L^2(\Omega^\delta)}\|\nabla\Phi_0^\varepsilon\|_{L^4(\Omega^\delta)}.
\end{aligned}
\end{equation}
Since by Poincar\'e's inequality and $g^\varepsilon\in H_0^2(\Omega)$ we have
\begin{equation}\label{estimate3}
\|\nabla g^\varepsilon\|_{L^2(\Omega^\delta)}\leq C\delta\|\nabla^2g^\varepsilon\|_{L^2(\Omega^\delta)},
\end{equation}
the above inequality yields
\begin{align*}
1\leq C\|\nabla^2 g^\varepsilon\|_{L^2(\Omega^\delta)}^2\|\Phi_0\|_{H^2(\Omega^\delta)},
\end{align*}
where the right-hand side tends to $0$ as $\delta\rightarrow 0$. This is a contradiction. 
Thus there is a constant $C $ independent on $\lambda$ such that
$$
\|\varphi^{\delta,\varepsilon}_{\lambda}\|_{H^2(\Omega)}\leq C.
$$ 

By Leray-Schauder's fixed point theorem, the map $T^{\delta,\varepsilon}(1, \cdot)$  has a fixed point $\varphi^{\delta,\varepsilon}$  satisfying \eqref{T}.
\smallbreak

\noindent\textbf{Step 3 Passing to the limit $\varepsilon\rightarrow0$.}\\
Let $(\varphi^{\delta,\varepsilon})\in H_0^2(\Omega)$ be the solution of \eqref{T} given in Step 2.
We can follow exactly the contradiction argument   in Step 2 to show the uniform bound
$$
\|\varphi^{\delta,\varepsilon}\|_{H^2(\Omega)}\leq C,
$$
where $C$ is independent of $\varepsilon$ (by taking the subsequence $\varphi^{\delta, \varepsilon_n}$ whose norms tend to infinity by contradiction).

Hence there exists a subsequence (still denoted by $\varphi^{\delta,\varepsilon}$) such that
$$
\varphi^{\delta,\varepsilon}\rightarrow \varphi^\delta\hbox{ in }W^{1,4}(\Omega).
$$
Thus up to a subsequence $\Phi_0^{\delta, \varepsilon}+\varphi^{\delta, \varepsilon}\rightarrow \Phi_0^{\delta}+\varphi^{\delta}$ in $L^\infty(\Omega)$ and
\begin{align*}
&\rho^{\delta, \varepsilon}=\eta^\varepsilon(\Phi_0^{\delta, \varepsilon}+\varphi^{\delta, \varepsilon})\mathop{\rightharpoonup}\limits^{\ast} \rho^\delta=\eta(\Phi_0^{\delta}+ \varphi^\delta),\\
&
\mu^{\delta, \varepsilon}=b^\varepsilon(\rho^{\delta,\varepsilon})\mathop{\rightharpoonup}\limits^{\ast} \mu^\delta=b(\rho^\delta)
\quad\hbox{ in }L^\infty(\Omega),
\quad\hbox{ as }\varepsilon\rightarrow0.   
\end{align*}

Similar to \eqref{differencevarphi},
we take the difference between $\eqref{T}^\varepsilon$ and $\eqref{T}^{\varepsilon'}$   to derive  the inequality for  $ \dot\varphi^{\delta,\varepsilon}=\varphi^{\delta,\varepsilon}-\varphi^{\delta,\varepsilon'}$
 {\small\begin{align}\label{difference}
&\|\Delta \dot\varphi^{\delta,\varepsilon}\|_{L^2}^2
\\
&\leq C 
\Bigl(  \Bigl|\int_\Omega( \mu^{\delta,\varepsilon}- \mu^{\delta,\varepsilon'})
\Bigl( (\d_{22}\varphi^{\delta,\varepsilon}-\d_{11}\varphi^{\delta,\varepsilon}) (\d_{22}\dot\varphi^{\delta,\varepsilon}-\d_{11}\dot\varphi^{\delta,\varepsilon})
+ (2\d_{12}\varphi^{\delta,\varepsilon})(2\d_{12}\dot\varphi^{\delta,\varepsilon})\Bigr)\,dx\Bigr|
\notag\\
&\quad
+\Bigl| \int_{\Omega}( \rho^{\delta,\varepsilon}- \rho^{\delta,\varepsilon'})(\nabla^\perp(\Phi_0^{\delta,\varepsilon}+\varphi^{\delta,\varepsilon})
\otimes\nabla^\perp(\Phi_0^{\delta,\varepsilon}+\varphi^{\delta,\varepsilon})):\nabla\nabla^\perp\dot\varphi^{\delta,\varepsilon} \,dx\Bigr|
\notag\\
&\quad+\Bigl(\|\nabla(\Phi_0^{\delta,\varepsilon}+\varphi^{\delta,\varepsilon}-\Phi_0^{\delta,\varepsilon'}-\varphi^{\delta,\varepsilon'})\|_{L^4}\|\nabla(\Phi_0^{\delta,\varepsilon}+\varphi^{\delta,\varepsilon})\|_{L^4}
+\|f^\varepsilon-f^{\varepsilon'}\|_{H^{-1}}\Bigr)\|\dot\varphi^{\delta,\varepsilon}\|_{H^2}
\notag\\
&\quad+\Bigl| \int_{\Omega}( \mu^{\delta,\varepsilon}- \mu^{\delta,\varepsilon'})\Bigl( (\d_{22}\Phi_0^{\delta,\varepsilon}-\d_{11}\Phi_0^{\delta,\varepsilon}) (\d_{22}\dot\varphi^{\delta,\varepsilon}-\d_{11}\dot\varphi^{\delta,\varepsilon})
+ (2\d_{12}\Phi_0^{\delta,\varepsilon})(2\d_{12}\dot\varphi^{\delta,\varepsilon})\Bigr)\,dx\Bigr| \Bigr).\notag
\end{align}}
Therefore by view of the above convergence results
$$
\varphi^{\delta,\varepsilon}\rightarrow \varphi^\delta\hbox{ in }H^2(\Omega).
$$
Finally we take $\varepsilon\rightarrow0$ in \eqref{T}, then the limit $\varphi^\delta$ satisfies \eqref{ellipticvarphi}.
Hence $\Phi=\varphi^\delta+\Phi_0^\delta$ is a weak solution of~\eqref{ellipticbv}.


\subsection{More regularity results}\label{subs:k} 
In this subsection we prove the regularity results in Theorem \ref{thm} in the cases when $\eta$ is continuous and when $\eta\in C^k_b$, $k\geq2$, respectively.\\

\noindent\textbf{Case when $\eta$ is continuous}\\
If  $\Omega$ is a connected $C^{2,1}$-domain, $\Phi_0\in H^{\frac52}(\d\Omega)$ and $\Phi_1\in H^{\frac32}(\d\Omega)$, then we can extend the function $\Phi_0$ to the whole plane (still denoted by $\Phi_0$) such that $\Phi_0\in H^3(\R^2)$ with compact support and $\frac{\d\Phi_0}{\d n}|_{\d\Omega}=\Phi_1$.
Since the weak solution obtained in Subsection \ref{subs:bdd} satisfies $\Phi\in H^2(\Omega)\subset C^{\alpha}(\Omega)$, $\forall \alpha\in (0,1)$, then
$$
\rho=\eta(\Phi)\hbox{ and } \mu=b(\rho)\in C_b(\Omega),
$$
if $\eta$ is continuous.
Since $f\in L^2(\Omega;\R^2)$ and  $H^1(\Omega)\hookrightarrow L^p(\Omega)$, $\forall p\in [2,\infty)$, we can rewrite the elliptic equation \eqref{ellipticbv}   as the  fourth-order elliptic equation for $\varphi=\Phi-\Phi_0\in H^2_0(\Omega)$:
\begin{align*}
L_\mu\varphi= \nabla^\perp\cdot\div(\rho\nabla^\perp\Phi\otimes\nabla^\perp\Phi)- \nabla^\perp\cdot f
- L_\mu(\Phi_0).
\end{align*} 
By the $L^p$ estimate for the above fourth-order elliptic equation 
 in Theorem 8.6 in  \cite{Dong}, 
we have $\varphi\in W^{2,p}_0(\Omega)$ and hence $\Phi=\Phi_0+\varphi\in W^{2,p}(\Omega)$ for all finite $p$.\\

\noindent\textbf{Case when $\eta\in C^k_b$, $k\geq2$}\\ 
If  $\Omega$ is a connected $C^{k+1,1}$ domain and we assume the boundary condition $\Phi_0\in H^{k+\frac32}(\d\Omega)$, $\Phi_1\in H^{k+\frac12}(\d\Omega)$, then the above extended  function $\Phi_0 \in H^{k+2}(\R^2)\subset W^{k+1,p}(\R^2)$, $\forall p\geq 2$ with compact support and $\frac{\d\Phi_0}{\d n}|_{\d\Omega}=\Phi_1$.
We assume also smoothness in the data  $\eta, b\in C^k_b$ and $f\in H^{k-1}(\Omega)$ for $k\geq 2$.

As $\Phi\in W^{2,p}(\Omega)$ is proved in the case when $\eta$ is continuous, we first prove that $\Phi\in  W^{3,p}(\Omega)$ under the assumptions  $\Phi_0\in H^4(\R^2)\hookrightarrow W^{3,p}(\Omega)$, $f\in H^1(\R^2)\hookrightarrow L^p(\Omega)$ and $\eta,b\in C^2_b$.
We rewrite the elliptic equation \eqref{ellipticbv} as follows:
\begin{equation}\label{biharmonic}\begin{split}
 \Delta^2\varphi
&=\mu^{-1}\Bigl( -\bigl( (\d_{22}-\d_{11})\mu\bigr) \bigl((\d_{22}-\d_{11})\varphi\bigr)
-\bigl( (2\d_{12})\mu\bigr) \bigl( (2\d_{12})\varphi\bigr)
\\
&\quad 
-2(\d_2\mu)\bigl( (\d_{222}-\d_{112})\varphi\bigr)+2(\d_1\mu)\bigl( (\d_{122}-\d_{111})\varphi \bigr)
\\
&\quad 
-2(\d_1\mu) (2\d_{122})\varphi-2(\d_2\mu)(2\d_{112})\varphi-L_\mu(\Phi_0)
\\
&\quad-\nabla^\perp\cdot f+\nabla^\perp\cdot\div(\rho\nabla^\perp\Phi\otimes\nabla^\perp\Phi)\Bigr), 
\end{split}\end{equation}
where $\varphi=\Phi-\Phi_0\in H^2_0(\Omega)\cap W^{2,p}(\Omega)$.
Notice that $\rho=\eta(\Phi)$ and $\mu=(b\circ\eta)(\Phi)$ belong to $  W^{2,p}(\Omega)$ for  any $p\in (2,\infty)$.
Then for any fixed $\psi\in W^{1,q}_0(\Omega)$, $1<q<2$ we have 
\begin{align*}
&\nabla^2\mu\,\psi\in L^q(\Omega),  
\quad \nabla \mu\,\psi\in W^{1,q}_0(\Omega),
\\
& \rho\nabla^2\psi\in W^{-1,q}(\Omega),
\quad\mu^{-1}\psi\in W^{1,q}_0(\Omega),
\end{align*}
and hence the righthand side of \eqref{biharmonic} is in $W^{-1,q'}(\Omega)$, the dual space of $W^{1,q}_0(\Omega)$.
Therefore  by \eqref{biharmonic}, $\varphi\in W^{3,p}(\Omega)$ for all $p\in (2,\infty)$ and the same holds for $\Phi=\Phi_0+\varphi$. 

We assume inductively $\eta, b\in C^k_b$ and $\Phi\in W^{k,p}(\Omega)$, for $k\geq 3$, $\forall p\in (2,\infty)$, then 
$\rho=\eta(\Phi)$, $\mu=b(\rho)$ and $\varphi$ belong to $W^{k,p}(\Omega)$ for any $p\in (2,\infty)$.
Thus the righthand side of \eqref{biharmonic} belongs to $W^{k-3,p}(\Omega)$,
and hence $\varphi\in W^{k+1,p}(\Omega)$, which implies $\Phi=\Phi_0+\varphi\in W^{k+1,p}(\Omega)$.

  \appendix
  \section {The exterior domain and the whole plane cases} \label{app:ex}
In this section we consider    the Navier--Stokes system   \eqref{BVP}
\begin{equation}\label{BVP:app}
\left\{
\begin{aligned}
&\di(\rho u\otimes u)-\di(\mu Su)+\nabla\Pi= f,\\
&\di u=0,\,\di(\rho u)=0, 
\end{aligned}
\right.
\end{equation}
where $\mu=b(\rho)$, on an exterior domain or on the whole plane respectively. 
As in the bounded domain case, we will search for solutions of Frolov's form $(\rho, u)=(\eta(\Phi), \nabla^\perp \Phi)$, and it reduces to the study of the elliptic equation \eqref{BVP:Phi}  
\begin{equation}\label{BVP:Phi,app} 
\begin{aligned}
&L_\mu\Phi=-\nabla^\perp\cdot f+\nabla^\perp\cdot\div(\rho \nabla^\perp\Phi\otimes\nabla^\perp\Phi),
\end{aligned} 
\end{equation}
where
$$
L_\mu=(\d_{x_2x_2}-\d_{x_1x_1}) \mu (\d_{x_2x_2}-\d_{x_1x_1}) +(2\d_{x_1x_2})\mu (2\d_{x_1x_2} ).$$ 

Let $\Omega$ be an exterior domain or the whole plane.
We first  define the  functional spaces we are going to use
$$
D^k(\Omega):=    \dot{H}^k (\Omega)\cap \bigl(\cap_{n\geq 1}H^k(\Omega\cap B_n(0))\bigr),\quad k=1,2,
$$
where $B_n(0)$ denotes the disk centered at $0$ with radius $n$ and 
  the homogeneous Sobolev space $\dot{H}^k(\Omega)$, $k\in\N$  is  defined as
$$\dot{H}^k(\Omega)=\{g\in L^1_\loc(\Omega):\d^\alpha g\in L^2(\Omega),\,|\alpha|=k\}.$$
We define the corresponding weak solutions as follows.
\begin{definition}\label{def22}
\begin{enumerate}[(i)]
\item \textup{(Weak solutions of the Navier--Stokes equations on an exterior domain).} Let $\Omega\subset\R^2$ be the exterior domain of a bounded simply connected $C^{1}$ set. We say that a pair 
$(\rho, u)\in L^\infty(\Omega;[0,\infty))\times D^1(\Omega; \R^2)$
is a weak solution of the boundary value problem \eqref{BVP:app} with the boundary value $u_0\in H^{\frac12}({\partial\Omega};\R^2)$ and $f\in H^{-1}(\Omega;\R^2)$, if $\div u=0$, $\div(\rho u)=0$ hold in $\Omega$ in the distribution sense,  $u_0=u|_{\partial\Omega}$ is the trace of $u$ on $\partial\Omega$ and the integral identity   
\begin{equation}\label{NSweak,app}
\frac12\int_{\Omega}\mu Su:Sv\,dx=\int_{\Omega} \rho( u\otimes u):\nabla v\,dx
+\int_{\Omega} f\cdot v\,dx,
\end{equation} 
holds for all $v\in C_c^\infty(\Omega; \R^2)$ with $\div v=0$ and compact support.

\item \textup{(Weak solutions of the elliptic equation on an exterior domain).} Let $\Omega\subset\R^2$ be the exterior domain of a bounded simply connected $C^{1,1}$ set.
Let $\eta\in L^\infty(\R;[0,\infty))$ and $b\in C(\R;[\mu_\ast, \mu^\ast])$ be two given functions.
We say that   $\Phi\in D^2(\Omega; \R)$ is a weak solution of the boundary value problem \eqref{BVP:Phi,app} with the boundary values $\Phi_0\in H^{\frac32}(\partial\Omega)$, $\Phi_1\in H^{\frac12}(\partial\Omega)$,  and $f\in H^{-1}(\Omega;\R^2)$, if $\Phi_0=\Phi|_{\partial\Omega}$ and $\Phi_1=\frac{\d\Phi}{\d n}\Big|_{\d\Omega}$ in the trace sense  and the identity \eqref{ellipticweak}:
\begin{equation}\label{ellipticweak,app}\begin{split}
&\int_{\Omega}\mu
\Bigl( (\d_{x_2x_2}\Phi-\d_{x_1x_1}\Phi) (\d_{x_2x_2}\psi-\d_{x_1x_1}\psi)
+ (2\d_{x_1x_2}\Phi)(2\d_{x_1x_2}\psi)\Bigr)\,dx
\\
&=\int_{\Omega} f\cdot \nabla^\perp  \psi\,dx+\int_{\Omega}\rho(\nabla^\perp\Phi\otimes\nabla^\perp\Phi):\nabla\nabla^\perp\psi\,dx,
\end{split}
\end{equation} 
holds true for all $\psi\in C_c^\infty(\Omega; \R)$ with compact support. 

\item \textup{(Weak solutions   on $\R^2$).}  We define the weak solutions of the equations \eqref{BVP:app} (resp. \eqref{BVP:Phi,app}) on $\R^2$ as in $(i)$  (resp. in $(ii)$) above without any boundary condition.
 
\end{enumerate}
\end{definition} 

We have the following existence results on the exterior domains or on the whole plane, which will be proved in Subsection \ref{subs:ex} and Subsection \ref{subs:R2} below respectively.
\begin{theorem}
\label{thm:un}
Let $\eta\in L^\infty(\R;[0,\infty))$, $b\in C(\R;[\mu_\ast, \mu^\ast]))$, $\mu_\ast, \mu^\ast>0$ be given.  
\begin{enumerate}[(i)]
\item  \textup{(The exterior domain case).}    
Let $\Omega\subset\R^2$ be the exterior domain of a bounded simply connected $C^{1,1}$ set.
Let $f=\di F$ with  $F\in L^2(\Omega; \R^2\times\R^2)$ be given. 
Then for any $\Phi_0\in H^{\frac32}(\partial\Omega)$ and $\Phi_1\in H^{\frac12}(\partial\Omega)$,  there exists at least one weak solution $\Phi\in D^2(\Omega)$ of the boundary value problem \eqref{BVP:app}.

Let $C_0\in\R$ and $u_0\in H^{\frac12}(\partial\Omega; \R^2)$ satisfy the zero flux condition $\int_{\d\Omega}u_0\cdot n\,ds=0$. 
If $\Phi_0\in H^{\frac32}(\partial\Omega)$ and $\Phi_1\in H^{\frac12}(\partial\Omega)$ 
and $\Phi\in D^2(\Omega)$ is a weak solution of \eqref{BVP:Phi,app} given above, then the pair of Frolov's form \eqref{Fro}
\begin{equation*}
(\rho, u )=\bigl(\eta(\Phi), \,\nabla^\perp \Phi \bigr)
\end{equation*}
  is a weak solution of the boundary value problem \eqref{BVP:app} with $u\in D^1(\Omega; \R^2)$. 

\item%
\label{i:existenceL2} \textup{(The whole plane case).} 
Let $\Omega=\R^2$ and $D\subset \Omega$ be a bounded subset of positive Lebesgue measure. 
Let $f=\di F$, where $F \in L^2(\R^2; \R^2\times\R^2)$. Then for any fixed vector $d\in \R^2$, there exists at least one weak solution $\Phi \in D^2(\R^2)$ of the elliptic equation \eqref{BVP:Phi,app} on $\R^2$, such that   $u=\nabla^\perp\Phi\in D^1(\R^2; \R^2)$ is a weak solution of the equation \eqref{BVP:app} on $\R^2$ and $\frac{1}{\hbox{meas}(D)}\int_{D}u=d$.

\end{enumerate} 
\end{theorem}  
\begin{remark}[Asymptotic behaviours  on unbounded domains]
If $\Omega$ is an unbounded  domain, we denote the  asymptotic behavior  of the solutions $u$ at infinity by $u_{\infty}$
$$\lim_{|x|\to \infty}u(x)=u_{\infty}, \quad u_{\infty}\in\R^2.$$
The  existence result in Theorem \ref{thm:un} does not give the information of $u_\infty$. 
On the other side, we   don't know the existence of decaying solutions of the Navier-Stokes system \eqref{BVP:app} on the exterior domain or the whole plane. 

These problems are still not clear even for the classical Navier--Stokes equations \eqref{CNS}
\begin{equation}\label{CNS:app}
\left\{
\begin{aligned}
&\di(  u\otimes u)-\nu \Delta u+\nabla\Pi= f,\\
&\di u=0,
\end{aligned}
\right.
\end{equation}
  for the two-dimensional case. 
In the three-dimensional case, thanks to the Hardy inequality
\begin{equation}
\label{3D:Hardy}
\left\|\frac{u-u_\infty}{|x|}\right\|_{L^2(\Omega)}\le C \|\nabla u\|_{L^2(\Omega)}, \quad \Omega\subset\R^3,   
\end{equation}
one has 
$u-u_\infty\in L^6(\Omega)$ and one can show the existence of the weak solutions with certain limit at large distance.
However, the Hardy inequality \eqref{3D:Hardy} does not hold for the two dimensional case, which brings more difficulties to study the asymptotic behaviours (the limit of $u$ at large distance is hard to be determined if $u$ is only in $\dot H^1(\R^2)$).
There are some partial results in the two-dimensional case: 
The solvability  of \eqref{CNS:app}
on the exterior domain with the restriction $u_{\infty}=0$ (under some symmetric assumptions) was established in e.g. \cite{Wittwer, Yamazaki}.
There are also some works considering the asymptotic behaviors of the (general) weak solutions: \textcite {Gilbarg,Gilbarg2} showed that the weak solutions of \eqref{CNS:app} satisfy  $\lim_{r\to\infty}\int_0^{2\pi}|u(r,\theta)|^2 \,d\theta=\infty$ or $\lim_{r\to\infty}\int_0^{2\pi}|u(r,\theta)-\bar u|^2\,d\theta=0$ for some $\bar u\in \R^2$, and J. Amick discussed the relation between $u_\infty$ and $\bar u$ in \cite{Amick}. 

\textcite{Galdi} showed the  non uniqueness of the solutions to the classic Navier--Stokes equation \eqref{CNS:app} with certain boundary condition $u_0$ and $u_{\infty}=0$. 
Hence the weak solutions of the system \eqref{BVP:app} are also not unique, at least in the case without any smallness or symmetric assumptions.


\end{remark}


 \subsection{The exterior domain case}\label{subs:ex}
 
In this subsection we prove (i) in Theorem \ref{thm:un}.
We are going to show the existence of the weak solutions to the boundary value problem of the elliptic system \eqref{BVP:Phi,app} on the exterior
domain of a simply connected $C^{1,1}$-set $\Omega$, by an approximation
argument.
Then the existence of the weak solutions of Frolov's form of the boundary value  problem for the Navier--Stokes equations \eqref{BVP:app} follows immediately.

Let $N\in \N$ such that $\Omega^c\subset B_N(0)=\{x\in\R^2\,|\, |x|< N\}$. 
Let $\Omega_n=\Omega\cap B_{N+n}(0) \subset \R^2$, then $\{\Omega_n\}$ is a monotonically increasing sequence which has the exterior domain $\Omega$ as its limit. 
By the solvability result in Theorem \ref{thm} $(i)$, for any given $\eta\in L^\infty(\R;[0,\infty))$, $b\in C(\R;[\mu_\ast, \mu^\ast])$, $0<\mu_\ast\leq \mu^\ast$, and $f=\div F\in H^{-1}(\Omega; \R^2)$, there exists a weak solution  $\Phi_n\in H^2(\Omega_n)$ of the boundary value problem \eqref{BVP:Phi,app} on $\Omega_n$ with the boundary condition $\Phi_n|_{\d\Omega}=\Phi_0\in H^{\frac32} (\d\Omega)$, $\frac{\d\Phi_n}{\d n}|_{\d\Omega}=\Phi_1\in H^{\frac12} (\d\Omega)$, and $\Phi_n|_{\d B_{N+n}(0)}=\frac{\d\Phi_n}{\d n}|_{\d B_{N+n}(0)}=0$.  
According to the proof of Theorem \ref{thm} $(i)$ in   Subsection \ref{subs:bdd}, for any fixed small enough $\delta>0$, we can write 
$$\Phi_n=\varphi_n^\delta+\Phi_0^\delta, \hbox{ with }
\varphi_n^\delta\in H^2_0(\Omega_n) \hbox{ satisfying \eqref{ellipticvarphi},}$$
 and   $\Phi_0^\delta(x)=\Phi_0(x)\zeta(x;\delta)$  is defined in \eqref{Phi0delta}.
We extend $\varphi_n^\delta$ from $\Omega_n$ to $\Omega$ by simply taking $\varphi_n^\delta|_{\Omega\backslash\Omega_n}=0$ (still denoted by $\varphi_n^\delta$). 

We are going to show that $\|\varphi_n^\delta\|_{\dot{H}^2(\Omega)}$ is uniformly bounded.
We take $\psi=\varphi_n^\delta$ in the equation \eqref{ellipticvarphi} for $\varphi_n^\delta$, to derive 
\begin{equation*}\label{T2}\begin{split}
&\int_{\Omega}\mu_n
\Bigl( (\d_{22}\varphi_n^\delta-\d_{11}\varphi_n^\delta)^2 
+ (2\d_{12}\varphi_n^\delta)^2\big)\,dx
\\
&=\int_{\Omega}\rho_n(\nabla^\perp(\Phi_0^{\delta}+\varphi_n^\delta)
\otimes\nabla^\perp(\Phi_0^{\delta}+\varphi_n^\delta)):\nabla\nabla^\perp\varphi_n^\delta\,dx
 -\int_{\Omega}F\cdot \nabla \nabla^\perp\varphi_n^\delta\,dx 
\\
& \quad
-\int_{\Omega}\mu_n\Bigl( (\d_{22}\Phi_0^{\delta}-\d_{11}\Phi_0^{\delta}) (\d_{22}\varphi_n^\delta-\d_{11}\varphi_n^\delta)
+ (2\d_{12}\Phi_0^{\delta})(2\d_{12}\varphi_n^\delta)\Bigr)\,dx,
\end{split}
\end{equation*} 
where $\rho_n=\eta(\Phi_n)=\eta(\varphi_n^\delta+\Phi_0^\delta)$ and $\mu_n=b(\rho_n)$.
 Similarly as in the derivation of \eqref{Estimate:varphi,eps}, we have  
\begin{equation}\label{Delta}
\begin{aligned}
\|\Delta \varphi_n^\delta\|_{L^2(\Omega)}^2
&\leq C(\rho^\ast,\mu_\ast,\mu^\ast)
\Bigl(\bigl( \|\Phi_0^{\delta}\|_{H^2}^2+ \|F\|_{L^2}+\|\Phi_0^{\delta}\|_{H^2}\bigr)\|\Delta \varphi_n^\delta\|_{L^2(\Omega)}
\\
&\qquad+\int_{\Omega} \rho_n\nabla^\perp \varphi_n^\delta\cdot\nabla\nabla^\perp\varphi_n^\delta\cdot
 \nabla^\perp\Phi_0^{\delta} \,dx\Bigr).
\end{aligned}
\end{equation}
 By the Riesz inequality (cf.  \cite{Demengel}), we have $\|\Delta\varphi_n^\delta\|_{L^2(\Omega)}\sim \|\varphi_n^\delta\|_{\dot H^2(\Omega)}$.
We are going to follow exactly the contradiction argument  in Step 3 in Subsection \ref{subs:bdd} to show the uniform boundedness of $\|\varphi_n^\delta\|_{\dot H^2(\Omega)}$ and hence we will just sketch the proof and emphasize the difference for the exterior domain case. 
Suppose by contradiction that there exists a subsequence $(\varphi_{k_n}^\delta)\subset(\varphi_n^\delta)$ such that
$$
\|\Delta\varphi_{k_n}^\delta\|_{L^2(\Omega)}\rightarrow\infty,\quad\text{as}\quad k_n\to\infty.
$$
Denote $g_{k_n}^\delta=\frac{\varphi_{k_n}^\delta}{\|\Delta\varphi_{k_n}^\delta\|_{L^2(\Omega)}}$, then  $\|\Delta g_{k_n}^\delta\|_{L^2(\Omega)}=1$, $\operatorname{tr}(g_{k_n}^\delta)|_{\d\Omega}=0$ and there exist  a subsequence (still denoted by $(g_{k_n}^\delta)$) and $g\in\dot{H}^2(\Omega)$ with $\operatorname{tr}(g)|_{\d\Omega}=0$ such that
\begin{align*}
g_{k_n}^\delta\rightharpoonup g \hbox{ in }\,\dot{H}^2(\Omega),\quad \text{as}\quad k_n\to\infty. 
\end{align*} 
Here the limit function $g$ does not depend on $\delta$.
Recall that $\Omega^\delta$ is the boundary strip of width $\delta$. 
By Poincar\'e's inequality we obtain $g_{k_n}^\delta|_{\Omega^\delta}\rightharpoonup g|_{\Omega^\delta} $ in $H^2(\Omega^\delta)$ and by Sobolev embedding $g_{k_n}^\delta|_{\Omega^\delta}\to g|_{\Omega^\delta} $ in $W^{1,4}(\Omega^\delta)$.
We take $k_n\rightarrow\infty$ in \eqref{Delta} to derive that
\begin{align*}
1\leq  C\int_{\Omega^\delta}
\Bigl| \nabla^\perp g\cdot\nabla\nabla^\perp g\cdot
 \nabla^\perp \Phi_0^\delta\Bigr|\,dx.
\end{align*}
By using the same estimates \eqref{estimate1}-\eqref{estimate2}-\eqref{estimate3}   we arrive at
\begin{align*}
1\leq C\|\Delta g\|_{L^2(\Omega^\delta)}^2\|\Phi_0\|_{H^2},
\end{align*}
where the right-hand side tends to $0$ as $\delta\rightarrow 0$. This is a contradiction. Hence
there exists a constant $C$ independent of $n$ such that
$$
\|\varphi_n^\delta\|_{\dot{H}^2(\Omega)}\leq C.
$$
Then  there exists a subsequence (still denote by $(\varphi_n^\delta)$) converging weakly to a limit $\varphi^\delta$ in $\dot H^2(\Omega)$, with $\operatorname{tr}|_{\partial\Omega}(\varphi^\delta)=0$. 
Let 
$$\Phi=\Phi_0^\delta+\varphi^\delta,$$
 then $\Phi_n=\Phi_0^\delta+\varphi_n^\delta\rightharpoonup\Phi$ in $ \dot{H}^2(\Omega)$.
 By Poincar\'e's inequality and a Cantor diagonal argument, there exists a subsequence (still denoted by $(\Phi_n)$) such that
 $$
 \Phi_n\rightarrow \Phi \hbox{ a.e. in }\Omega \hbox{ and } \rho_n\mathop{\rightharpoonup}\limits^\ast \rho=\eta(\Phi),\, \mu_n\mathop{\rightharpoonup}\limits^\ast b(\rho)=\mu
\hbox{ in }L^\infty(\Omega)\hbox{ as }n\rightarrow\infty.
 $$

We are going to show that $\Phi$ is a  weak solution of the equation \eqref{BVP:Phi,app}  on the exterior domain $\Omega$. 
Fix a test function $\psi\in C_c^\infty(\Omega)$ with compact support. 
Then there exists a ball containing   $ \Omega^C\cup \operatorname{Supp}(\psi)$ and without loss of generality we suppose it to be $B_1(0)$.
Let $V=B_1(0)\cap\Omega$, then, we are going to show, up to a subsequence,
$$
\varphi_n^\delta\rightarrow \varphi^\delta\hbox{ in }H^2(V).
$$
Indeed, we take a smooth cutoff function $\chi$ with $\chi=1$ on $B_1(0)$ and $\chi=0$ outside $B_2(0)$.
We take the difference between the equation \eqref{ellipticvarphi} for $\varphi_n^\delta$ and the equation \eqref{ellipticvarphi} for $\varphi_m^\delta$ and then take $\psi=\chi\varphi_{n,m}^\delta$, $\varphi_{n,m}^\delta=\varphi_n^\delta-\varphi_m^\delta$ in \eqref{ellipticvarphi}.
We arrive at the following inequality similar as \eqref{difference}
{\small\begin{align*} 
&\int_{B_2(0)\cap \Omega}\mu_n \Bigl( (\d_{22}-\d_{11})\varphi_{n,m}^\delta(\d_{22}-\d_{11})(\chi\varphi_{n,m}^\delta)+2\d_{12}(\varphi_{n,m}^\delta)2\d_{12}(\chi\varphi_{n,m}^\delta)\Bigr)
\\
&\leq    \Bigl|\int_{B_2(0)\cap \Omega}(\mu_n-\mu_m)
\Bigl( (\d_{22}-\d_{11})\varphi_m^\delta(\d_{22}-\d_{11})(\chi\varphi_{n,m}^\delta)+2\d_{12}(\varphi_m^\delta)2\d_{12}(\chi\varphi_{n,m}^\delta)\Bigr)\,dx\Bigr|
\notag\\
&\quad
+\Bigl| \int_{B_2(0)\cap \Omega}\Bigl( \rho_n\nabla^\perp\Phi_n\otimes\nabla^\perp\Phi_n-\rho_m\nabla^\perp\Phi_m\otimes\nabla^\perp\Phi_m\Bigr):\nabla\nabla^\perp(\chi\varphi_{n,m}^\delta)\,dx
\\
&\qquad
-\int_{B_2(0)\cap \Omega}(\mu_n-\mu_m)
\Bigl( (\d_{22} -\d_{11})\Phi_0^\delta (\d_{22}-\d_{11})(\chi\varphi_{n,m}^\delta)
+ (2\d_{12}\Phi_0^\delta)2\d_{12}(\chi\varphi_{n,m}^\delta)\Bigr)\,dx\Bigr|.
\end{align*}} 
The left-hand side above is bigger than
\begin{align*}
&\int_{V}\mu_n \Bigl( \bigl( (\d_{22}-\d_{11})\varphi_{n,m}^\delta\bigr)^2+\bigl(2\d_{12}(\varphi_{n,m}^\delta)\bigr)^2\Bigr)
\\
&-\Bigl| \int_{B_2(0)\backslash B_1(0)} \mu_n\Bigl( (\d_{22}-\d_{11})\varphi_{n,m}^\delta 
\bigl(( (\d_{22}-\d_{11})\chi) \varphi_{n,m}^\delta+2(\d_2\chi\d_2\varphi_{n,m}^\delta-\d_1\chi \d_1\varphi_{n,m}^\delta)\bigr)
\\
&\qquad
+2\d_{12}\varphi_{n,m}^\delta(2(\d_{12}\chi)\varphi_{n,m}^\delta+2\d_1\chi\d_2\varphi_{n,m}^\delta+2\d_2\chi\d_1\varphi_{n,m}^\delta\bigr)\Bigr)\Bigr|.
\end{align*}
As up to a subsequence we may assume  
\begin{align*}
&\varphi_{n,m}^\delta\rightarrow 0\hbox{ in }H^1(B_2(0)\cap\Omega),
\quad \Phi_{n}-\Phi_m\rightarrow 0\hbox{ in }W^{1,4}(B_2(0)\cap\Omega)
\hbox{ as }n,m\rightarrow\infty,
\end{align*}
we hence have $\varphi_{n,m}^\delta\rightarrow 0$ in $H^2(V)$.
Therefore $\Phi_n\rightarrow\Phi$ in $H^2(V)$, and the limit $\Phi$ (together with the limits $\rho,\mu$) satisfies the integral equality \eqref{ellipticweak,app}.
As $\psi\in C^\infty_c(\Omega)$ has been chosen arbitrarily, $\Phi\in D^2(\Omega)$ is a weak solution of equation \eqref{BVP:Phi,app} on $\Omega$ satisfying the boundary condition $\Phi|_{\d\Omega}=\Phi_0\in H^{\frac32} (\d\Omega)$, $\frac{\d\Phi}{\d n}|_{\d\Omega}=\Phi_1\in H^{\frac12} (\d\Omega)$ in the trace sense.

\subsection{The whole plane case}\label{subs:R2}
In this subsection, we will follow the idea in \cite{Guillod}  to prove $(ii)$ in Theorem \ref{thm:un}. 
We will denote $\fint_D=\frac{1}{\hbox{meas}(D)}\int_{D}$.
We  take a bounded simply connected $C^{1,1}$-domain $U\supset D$ and we make an Ansatz
$$
u=d+w-\bar w,
$$
where $w\in H_0^1(U; \R^2)$, $\div w=0$ and $\bar w= \fint_{D} w\in \R^2$ is a constant vector. 
In other words, if $\varphi$ is the stream function of $w$, then 
$$
u=\nabla^\perp \Phi, 
$$
where we take 
$$
\Phi=\varphi+(d-\bar w)\cdot \begin{pmatrix}x_2\\-x_1\end{pmatrix}+C,\quad \bar w= \fint_{D} \nabla^\perp\varphi\in\R^2
$$
with any fixed constant $C\in \R$.
We can typically choose 
$$
C=C_\varphi:=-\fint_D\varphi \,dx-\fint_D (d-\overline w)\cdot\begin{pmatrix}x_2\\ -x_1\end{pmatrix}dx,
$$
such that $\fint_D \Phi=0$.

We then search for $\varphi\in H_0^2(U)$ satisfying 
\begin{equation*}
\begin{split}
&\int_{U}\mu
\Bigl( (\d_{22}\varphi-\d_{11}\varphi) (\d_{22}\psi-\d_{11}\psi)
+ (2\d_{12}\varphi)(2\d_{12}\psi)\Bigr)\,dx
\\
&=\int_{U}\rho(\nabla^\perp \Phi
\otimes\nabla^\perp \Phi):\nabla\nabla^\perp\psi\,dx
-\int_{U}F\cdot \nabla\nabla^\perp\psi\,dx,
     \quad \forall\psi\in H  ^2_0(U;\R),
\end{split}
\end{equation*} 
where $\Phi=\varphi+(d- \fint_{D} \nabla^\perp\varphi)\cdot \begin{pmatrix}x_2\\-x_1\end{pmatrix}+C_\varphi$, $\rho=\eta( \Phi)$ and $\mu=b(\rho)$. 
Following the proof lines in Subsection \ref{subs:bdd}, such  $\varphi$ exists, and hence there exists $w\in H_0^1(U; \R^2)$  satisfying 
\begin{equation}\label{ellipticgamma}
\frac12\int_{U}\mu
Sw:Sv\,dx=\int_{U}\rho\big(w+d-\fint_D w\big)
\otimes\bigl(w+d-\fint_Dw\bigr):\nabla v\,dx
-\int_{U}F\cdot \nabla v\,dx
\end{equation}
for any $v\in H_0^1(U;\R^2) $ with $\di v=0$. 
By taking $v=w$ in \eqref{ellipticgamma}, the first integral on the righthand side vanishes (since $\div(\rho(w+d-\fint_{D}w))=0$), and we obtain 
\begin{equation*}\label{unibound}
 \|w\|_{\dot{H}^1(U)}\le C( \mu_\ast)\|F\|_{L^2(\R^2)}.   
\end{equation*} 
Then we arrive at a distribution  solution $\Phi=\varphi+(d- \fint_{D} \nabla^\perp\varphi)\cdot \begin{pmatrix}x_2\\-x_1\end{pmatrix}+C_\varphi$, with $\varphi\in H_0^2(U)$ of the elliptic equation \eqref{BVP:Phi,app} on the bounded domain $U$,  which satisfies  $\fint_D\Phi=0$.
Hence $(\rho, u)=(\eta( \Phi), \nabla^\perp \Phi)=(\eta(\Phi), w+d-\fint_D w)$ with $w=\nabla^\perp\varphi\in H_0^1(U)$ is a distribution solution  of the system \eqref{BVP:app} on the bounded domain $U$, which satisfies  $\fint_D u=d$.

As in Subsection \ref{subs:ex}, we take the approximation argument to show the existence of the weak solutions on the whole plane $\R^2$.
Indeed, if we take $U=B_n(0)\supset D$ in the above,
then we have arrived at a distribution solution 
$$ \Phi_n=\varphi_n+(d-\overline{w_n})\cdot \begin{pmatrix}
x_2\\ -x_1
\end{pmatrix}+C_{\varphi_n},$$
with $\varphi_n\in H_0^2(B_n(0))$ and $ \overline{w_n}= \fint_{D} \nabla^\perp\varphi_n\in\R^2$,
of   \eqref{BVP:Phi,app}   in $B_n(0)$ with $\fint_D \Phi_n=0$,
and hence a distribution solution
$$
(\rho_n, u_n)=(\eta( \Phi_n), \nabla^\perp  \Phi_n)=(\eta( \Phi_n),   w_n+d-\overline {w_n}),$$ 
with $w_n=\nabla^\perp \varphi_n\in H^1_0(B_n(0); \R^2)$, 
 of \eqref{BVP:app} in $B_n(0)$ with $ \fint_{D} u_n=d$. 
We extend $w_n$ trivially to $\R^2$ (that is, we simply take $w_n=0$ outside $B_n(0)$) and take  $\Phi_n= (d-\overline{w_n})\cdot \begin{pmatrix}
x_2\\ -x_1
\end{pmatrix}+C_{\varphi_n}$ and $u_n=d-\overline{w_n}$ outside $B_n(0)$, respectively.
 Let 
 $$\tau_n=w_n-\overline{w_n} \hbox{ with }\fint_{D}\tau_n=0,$$
then $u_n=d+\tau_n,$ with
\begin{equation*}
\|\tau_n\|_{\dot{H}^1(\R^2)}= \|w_n\|_{\dot{H}^1(\R^2)}\le C(\mu_*)\|F\|_{L^2(\R^2)}.
\end{equation*} 

Let $v\in C_c^{\infty}(\R^2; \R^2)$ with $\di v=0$ be any test function, then there exists $N\in \N$ such that $\operatorname{Supp}(v)\cup D\subset B_N(0)$. 
By the above uniform bound on $(\tau_n)$ and Poincar\'e's inequality, there exists a subsequence (still denoted by $(\tau_n)$)  such that $\tau_n\rightharpoonup\tau$ in $\dot H^1(\R^2)$ as $n\rightarrow\infty$, and in $H^1(B_N(0))$. 
Thus $\{u_n\}$ is uniformly bounded in $H^1(B_N(0))$ and $u_n\rightharpoonup u=\tau+d$ in $H^1(B_N(0))$.
Since $\fint_D \Phi_n=0$, by Poincar\'e inequality again, $\{\Phi_n\}$ is uniformly bounded in $H^2(B_N(0))$, and  up to a subsequence   $  \Phi_n\rightharpoonup \Phi$ in $H^2(B_N(0))$, with $\nabla^\perp \Phi=u$ and $\fint_D \Phi=0$.
Thus $  \Phi_n\rightarrow \Phi$ in $W^{1,4}(B_N(0))\subset C^{1/2}(B_N(0))$,
and $\rho_n=\eta( \Phi_n)\mathop{\rightharpoonup}\limits^\ast \rho=\eta(\Phi)$, $\mu_n=b(\rho_n)\mathop{\rightharpoonup}\limits^\ast \mu=b(\rho)$ in $L^\infty(B_N(0))$.
Exactly as the end of Subsection \ref{subs:ex}, $u_n\rightarrow u$ in $H^1(B_N(0))$.
Thus the limits $u,\,\rho,\,\mu$ satisfy the integral equality \eqref{NSweak,app} for the given test function $v$, and hence $u\in D^1(\R^2; \R^2)$ with $\fint_D u=d$ is a weak solution of the Navier-Stokes equation \eqref{BVP:app} on $\R^2$.
Correspondingly $\Phi\in D^2(\R^2)$ is a weak solution of the elliptic equation \eqref{ellipticweak,app} on $\R^2 $.

\section{Explicit symmetric solutions with piecewise-constant viscosity coefficients}\label{example} 
In this section, we will give explicit examples of solutions with piecewise-constant viscosity coefficients  of the Navier--Stokes equations
\begin{equation}\label{SNS:app}
\left\{
\begin{aligned}
&\di(\rho u\otimes u)-\di(\mu Su)+\nabla\Pi= 0,\\
&\di u=0,\,\di(\rho u)=0.
\end{aligned}
\right.
\end{equation}
Those examples are  
the parallel, concentric and radial flows
in Theorem \ref{thm2}, where the density functions have the forms
\begin{align*}
\rho=\rho(x_2)\hbox{ in }\R^2,
\,\,\rho(r)\hbox{ in }\R^2\backslash\{0\},
\,\,\hbox{ and }\,\,\rho(\theta)\hbox{ in }\R^2\backslash\{0\},
\end{align*}
and the corresponding 
velocity fields have the forms
\begin{equation*}
u=u_1(x_2)\,e_1\hbox{ in }\R^2,
\quad
rg(r)\,e_\theta\hbox{ in }\R^2\backslash\{0\},
\,\hbox{ and }\,
\frac{h(\theta)}{r}e_r\hbox{ in }\R^2\backslash\{0\}.
\end{equation*}
The functions  $u_1, g, h$ solve the following ODEs
\begin{align}
\label{ODE:app}
&
 \partial_{x_2}(\mu\partial_{x_2}u_1) =C,\notag
 \\
 &  \d_r( \mu r^3 \d_r g) = -Cr,\\
 &\rho h^2 +\d_\theta(\mu \d_\theta h)+4(\mu h)=C, \notag
\end{align}
where   $C\in\R$ can be any real number.
We assume that $\mu=b(\rho)$ depends strictly monotone on $\rho$.

\medskip
\noindent\textbf{Examples of parallel flows}

If $(\rho, u)=\bigl( \rho(x_2), \,\, u_1(x_2)\,e_1  \bigr)$ (not necessarily $\rho'\neq 0$) solves the system \eqref{SNS:app}, then $u$ satisfies $\eqref{ODE:app}_1$: $\d_{x_2}(\mu \d_{x_2}u_1)=C\in\R$. 
In particular,   with   the following viscosity coefficient $\mu$
\begin{equation*}
\label{mu:x2}
\mu=\mu(x_2)=2\mathbb 1_{\left\{x_2>0\right\}}+\mathbb 1_{\left\{x_2\le 0\right\}},
\end{equation*}
we have for some constant $C_1\in\R$ that
\begin{equation*}\label{solutionxy}  
 \partial_{x_2}u= {u_1}'(x_2)\,e_1
 =\Big( \bigl(\frac{C}{2}x_2+\frac{C_1}{2} \big)\mathbb{1}_{\{x_2>0\}}+\big(Cx_2+C_1 \big)\mathbb{1}_{\{x_2\leq  0\}} \Bigr) e_1,
\end{equation*} 
and hence
\begin{align*} 
&u_1''(x_2)=\frac C2\mathbb{1}_{\{x_2>0\}}+C\mathbb{1}_{\{x_2< 0\}}
-\frac{C_1}{2}\delta_0(x_2).
\end{align*} 

There exists a real constant $C_2\in\R$ such that $u\in H^1_\loc(\R^2)$ reads as  
\begin{equation}\label{u1:x2}
u=\Bigl( \bigl(\frac{C}{4}x_2^2+\frac{C_1}{2}x_2+C_2\bigr)\mathbb{1}_{\{x_2>0\}}+\bigl(\frac{C}{2}x_2^2+C_1x_2+C_2\bigr)
\mathbb{1}_{\{x_2\le 0\}}\Bigr) e_1.
\end{equation}
If we consider the Couette flow on the strip $\R\times [-1,1]$ with the boundary conditions
\begin{equation}\label{BC:x2}
u|_{\R\times\{\pm1\}}= a_{\pm}\, e_1\in\R^2, 
\end{equation}
then there hold only two equations for the three constants $C, C_1, C_2$  
$$
C=4(a_--a_+)+6C_1,\quad C_2=2a_+-a_--2C_1,\quad C_1\in\R.
$$
Hence  there are uncountably many solutions with the density function 
\begin{equation}\label{rho:x2}
\rho(x_2)=b^{-1}(2)\mathbb{1}_{\{x_2>0\}}+b^{-1}(1)\mathbb{1}_{\{x_2\leq 0\}}, 
\end{equation}
and the velocity vector field \eqref{u1:x2} to the boundary value problem \eqref{SNS:app}-\eqref{BC:x2}.
\footnote{For the homogeneous flow $\mu=1$,  the velocity vector field in the form of $u_1(x_2)\,e_1$ reads as $u=\bigl(\frac C2x_2^2+C_1x_2+C_2 \bigr) e_1$
with $C_1=\frac{a_+-a_-}{2}$, $\frac C2+C_2=\frac{a_++a_-}{2}$.}

It is easy to see that if $a_+<a_-<2a_+$ and $0<C_1<\frac{2a_+-a_-}{ {2}}$, then $C,C_2>0$ and $u_1(x_2)>0$ for $x_2\in [-1,1]$. 
Hence  $\d_{x_2}\Phi=u_1>0$ and there exists a constant $C_3\in\R$ such that the stream function
\begin{align*}
\Phi=\bigl( \frac{C}{12}x_2^3+\frac{C_1}{4}x_2^2+C_2 x_2+C_3 \bigr)\mathbb{1}_{\{x_2>0\}}+\bigl( \frac{C}{6}x_2^3+\frac{C_1}{2}x_2^2+C_2 x_2+C_3 \bigr)\mathbb{1}_{\{x_2\le 0\}}
\end{align*}
is a strictly increasing function from $[-1,1]$ to $[\Phi_-, \Phi_+]$, where 
\begin{align*}
\Phi_-=\Phi(-1)=\frac32 C_1+C_3-\frac43a_++\frac13a_-,
\quad
\Phi_+=\Phi(1)=-\frac54C_1+C_3+\frac53a_+-\frac23a_-.
\end{align*}  
Then the pair \eqref{rho:x2}-\eqref{u1:x2} is a solution of the system \eqref{SNS:app} in the Frolov's form $(\rho, u)=(\eta(\Phi), \nabla^\perp\Phi)$ with
\begin{align*}
\eta(y)=\left\{\begin{array}{ll}b^{-1}(2)&\hbox{ if }y\in(C_3,\Phi_+],\\ b^{-1}(1)&\hbox{ if }y\in[\Phi_-, C_3].\end{array}\right.
\end{align*}


\medskip
\noindent\textbf{Examples of concentric flows}

If $(\rho, u)= ( \rho(r), \,  rg(r)e_\theta)$ (not necessarily $\rho'\neq 0$) solves the system \eqref{SNS:app} with $f=0$, then $u$ satisfies $\eqref{ODE:app}_2$: $ \d_r( \mu r^3 \d_r g) = -Cr$. 
In particular,   with   the following viscosity coefficient $\mu$  
\begin{equation*}
\label{radial:murho}
\mu=\mu(r)=2\mathbb 1_{\{0<r<1\}}+\mathbb 1_{\{r\geq 1\}},
\end{equation*}   
we have for some real constant $C_1\in\R$ that
\begin{equation*}\label{radial:g}
\d_r g=\frac{-\frac C2r^2+C_1}{\mu r^3}=\bigl(-\frac C4\frac1r+\frac{C_1}{2}\frac1{r^3}\bigr)\mathbb 1_{\{0<r<1\}}
+\bigl(-\frac C2\frac1r+C_1\frac{1}{r^3}\bigr)\mathbb 1_{\{r\geq 1\}},
\,\, C,C_1\in\R.
\end{equation*}
There exists a constant $C_2\in\R$ such that (for $u\in H^1_\loc(\R^2\backslash\{0\})$)
\begin{equation}\label{g,r}
g(r)=\bigl(-\frac C4\ln r-\frac{C_1}{4}(\frac{1}{r^2}-1)+C_2\bigr)\mathbb 1_{\{0<r<1\}}
+\bigl( -\frac C2\ln r-\frac{C_1}{2}(\frac{1}{r^2}-1)+C_2\bigr)\mathbb 1_{\{r\geq 1\}}.
\end{equation}
If we consider the concentric flow on the annulus $\{x\in\R^2\,|\,\frac12\leq |x|\leq2\}$ and suppose the boundary conditions
\begin{equation}\label{BC:r}
u|_{\{x| |x|=\frac12\}}=\frac12 g_- e_\theta|_{\{x| |x|=\frac12\}},
\quad u|_{\{x| |x|=2\}}=2g_+ e_\theta|_{\{x| |x|=2\}},
\end{equation}
then
$$
C=\left(\frac{3\ln2}{4}\right)^{-1}(\frac98C_1-g_++g_-),
\quad C_2=\frac13(\frac98C_1+g_++2g_-),
\quad C_1\in\R.
$$
Hence the density function 
\begin{equation*}
\rho=\rho(r)=b^{-1}(2)\mathbb{1}_{\{0<r<1\}}+b^{-1}(1)\mathbb{1}_{\{r\geq1\}}
\end{equation*}
 and the velocity vector field $u=rge_\theta$ with $g$ given in \eqref{g,r} is a solution  of the boundary value problem \eqref{SNS:app}-\eqref{BC:r}.
We can follow the argument at the end of Case $\rho=\rho(x_2)$ to find the function $\eta$ such that $\rho=\eta(\Phi)$, provided with more  restrictions on $g_-, g_+, C_1$. We leave this to interested readers.

\medskip
\noindent\textbf{Examples of radial flows}

If $(\rho, u)= ( \rho(\theta), \,  \frac{h(\theta)}{r}e_r)$ (not necessarily $\rho'\neq 0$) solves the system \eqref{SNS:app}, then $u$ satisfies $\eqref{ODE:app}_3$: $\rho h^2 +\d_\theta(\mu \d_\theta h)+4(\mu h)=C\in\R$.  
Let the viscosity coefficient $\mu$   be
\begin{equation}\label{mu,theta} 
\mu=\mu(\theta) = 2\mathbb{1}_{[0, \frac\pi4)}+\mathbb{1}_{[\frac\pi4,\frac\pi2]}.
\end{equation} 
Then $(\rho, u)= ( \rho(\theta), \,  \frac{h(\theta)}{r}e_r)$ with $h(\theta)$ satisfying the following 
$$\d_\theta(\mu\d_\theta h)=0,
\quad \rho h+4\mu=0$$
is a solution of \eqref{SNS:app}, and in particular $h,\rho$ can be taken as follows
\begin{align*}
&\d_\theta h=\frac{-2}{\mu}=-\mathbb{1}_{[0, \frac\pi4)}-2\mathbb{1}_{[\frac\pi4,\frac\pi2]},
\quad h=(-\frac\pi2- \theta)\mathbb{1}_{[0,\frac\pi4)}+(-\frac\pi4- 2\theta )\mathbb{1}_{[\frac\pi4,\frac\pi2]},\notag
\\
& \rho=-\frac{4\mu}{h}\hbox{ such that }\mu=b(\rho)\hbox{ holds}.
\end{align*}
This radial flow moves toward the origin and moves faster when closer to the origin.
There are obviously other solutions of form $(\rho, u)= ( \rho(\theta), \,  \frac{h(\theta)}{r}e_r)$ to the system \eqref{SNS:app} with the viscosity coefficient \eqref{mu,theta}, and we do not go to details here.


\section*{Acknowledgments}
The work of Xian Liao is funded   by the Deutsche Forschungsgemeinschaft (DFG, German Research Foundation) – Project-ID 258734477 – SFB 1173.

\printbibliography
\end{document}